\theoremstyle{plain}
   \newtheorem{theorem}{Theorem}[section]
   \newtheorem{proposition}[theorem]{Proposition}
   \newtheorem{lemma}[theorem]{Lemma}
   \newtheorem{corollary}[theorem]{Corollary}
   \newtheorem{conjecture}[theorem]{Conjecture}
   \newtheorem*{theorem*}{Theorem}
\theoremstyle{definition}
   \newtheorem{definition}[theorem]{Definition}
   \newtheorem{example}[theorem]{Example}
   \newtheorem{remark}[theorem]{Remark}
\numberwithin{equation}{section}
\newcommand{\op}[1]{\operatorname{#1}}
\DeclareDocumentCommand \ltr { o } {%
  \IfNoValueTF {#1} {%
    \ell_W^{\mathrm{full}} %
  }{%
    \ell_{#1}^{\mathrm{full}}%
  }%
}
\DeclareDocumentCommand \lR { o } {%
  \IfNoValueTF {#1} {%
    \ell_W^{\mathrm{red}} %
  }{%
    \ell_{#1}^{\mathrm{red}}%
  }%
}
\DeclareDocumentCommand \Fred { o } {%
  \IfNoValueTF {#1} {%
    F_W^{\mathrm{red}} %
  }{%
    F_{#1}^{\mathrm{red}}%
  }%
}
\newcommand\Symm{\mathfrak{S}}
\newcommand\rank{\operatorname{rank}}
\newcommand\codim{\operatorname{codim}}
\newcommand\rk{\operatorname{rk}}
\newcommand{\RRR}{T}
\newcommand{\BBB}{\mathcal{B}}
\newcommand{\defn}[1]{{\color{blue} \it {#1}}}
\newcommand\ZZ{{\mathbb{Z}}}
\newcommand\RR{{\mathbb{R}}}
\newcommand\GL{{\mathrm{GL}}}
\newcommand{\id}{\op{id}}
\begin{document}

\title[Hurwitz orbits for parabolic quasi-Coxeter elements]{In which it is proven that, for each parabolic quasi-Coxeter element in a finite real reflection group, the orbits of the Hurwitz action on its reflection factorizations are distinguished by the two obvious invariants; and also, a lemma concerning minimal reflection generating sets}
\author{Theo Douvropoulos}
\author{Joel Brewster Lewis}
\thanks{JBL was supported in part by an ORAU Powe award and a grant from the Simons Foundation (634530)}
\maketitle

\begin{abstract}
    We prove that two reflection factorizations of a parabolic quasi-Coxeter element in a finite Coxeter group belong to the same Hurwitz orbit if and only if they generate the same subgroup and have the same multiset of conjugacy classes.  As a lemma, we classify the finite Coxeter groups for which every reflection generating set that is minimal under inclusion is also of minimum size.
\end{abstract}

\section{Introduction}

For any group $G$, there is a natural action of the $k$-stranded braid group 
\[
\BBB_k = \left\langle \sigma_1, \ldots, \sigma_{k - 1} \mid \sigma_i\sigma_j = \sigma_j\sigma_i \text{ if } |i - j| > 1 \text{ and }\sigma_i \sigma_{i + 1}\sigma_i = \sigma_{i + 1}\sigma_i\sigma_{i + 1} \text{ for } i = 1, \ldots, n - 1\right\rangle
\]
on the set of $k$-tuples of elements of $G$: the generator $\sigma_i$ acts via the \defn{(left) Hurwitz move}
\begin{equation}
\sigma_i (g_1,\cdots, \quad g_i, \quad g_{i+1}, \quad \cdots,g_k)=(g_1,\cdots, \quad g_{i+1}, \quad g^{-1}_{i+1}g_ig_{i+1}, \quad \cdots,g_k),
\label{eq:Hurwitz move}    
\end{equation}
swapping two adjacent elements and conjugating one by the other so as to preserve the product of the tuple. We call this the \defn{Hurwitz action} of $\BBB_k$ on $G^k$.  This action is of particular interest when $G$ is a reflection group and the factors $g_1, \ldots, g_k$ are reflections: in this setting, it played an important role in Hurwitz's study of branched Riemann surfaces \cite{Hurwitz} (for $G = \Symm_n$ the symmetric group) and in Bessis's proof of the $K(\pi,1)$ conjecture for complements of reflection arrangement \cite{Bessis_Annals} (for $G$ an arbitrary well generated complex reflection group).

There has also been considerable interest in fully understanding the structure of the Hurwitz action, independent of its applications to geometric problems. For abstract examples, the transitivity of the action has been interpreted \cite{Muhle_Ripoll} in terms of connectivity properties of posets associated to the group $G$ and a given generating set for it. For more explicit examples, the orbit structure for minimum length factorizations in $3$-cycles in the alternating group is completely understood \cite{Muhle_Nadeau}; the same is true for reflection factorizations in the symmetric group  \cite{Kluitmann, BIT} and in dihedral groups \cite{Sia, berger}.  For factorizations of the special \emph{Coxeter elements}, much is known in Coxeter groups \cite{bessis-dual-braid, IgusaSchiffler, BDSW, LR, Wegener_affine, WY, WY2} and complex reflection groups \cite{Ripoll, Lewis, Zach, Lazreq, Minnick}.  Of particular interest for us is the recent paper \cite{BGRW}, which gives a beautiful classification of the elements in a finite Coxeter group $W$ with the property that the Hurwitz action is transitive on their minimum-length reflection factorizations: they are precisely the \emph{parabolic quasi-Coxeter elements} of $W$ (defined below in Definition~\ref{def:pqC}).

In addition to preserving the product $g_1 \cdots g_k$ of the tuple of group elements, it is easy to see that the Hurwitz action also preserves the subgroup $H:=\langle g_1, \ldots, g_k\rangle$ generated by the factors, as well as their multiset of $H$-conjugacy classes.  In \cite[\S5]{Lewis}, it was conjectured that these invariants are sufficient to distinguish Hurwitz orbits.\footnote{In fact, the question raised in \cite{Lewis} applied to all \emph{complex} reflection groups.  However, it turns out that there are counterexamples among the minimum-length reflection factorizations in two of the exceptional complex reflection groups -- see \cite[Rem.~4.4]{LW}.}

\begin{conjecture}
\label{lewis conjecture}

Let $W$ be a finite Coxeter group and $g \in W$ an arbitrary element.  Then two reflection factorizations of $g$ belong to the same Hurwitz orbit if and only if they generate the same subgroup $H \leq W$ and have the same multiset of $H$-conjugacy classes.
\end{conjecture}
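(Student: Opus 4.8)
The ``only if'' direction is immediate and records the two invariants: a single Hurwitz move replaces a factor by an $H$-conjugate and leaves both the product $g$ and the subgroup $H=\langle g_1,\dots,g_k\rangle$ fixed, hence preserves the generated subgroup and the multiset of $H$-conjugacy classes; note also that equality of the class multisets forces the two factorizations to have the same length. For the ``if'' direction, fix factorizations $F=(t_1,\dots,t_k)$ and $F'=(t_1',\dots,t_k')$ of $g$, both generating $H$ and with the same multiset $M$ of $H$-conjugacy classes. Because every Hurwitz move keeps all factors inside $H$, the whole problem takes place in the reflection group $H$, so I would first replace $W$ by $H$ and assume from now on that $F$ and $F'$ generate all of $W$.

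The plan is then to induct on the length $k$, peeling off redundant reflection pairs until a shortest $W$-generating factorization remains. Write $m$ for the minimum length of a reflection factorization of $g$ that generates $W$. For the inductive step ($k>m$) I would prove a reduction statement: any $W$-generating factorization of $g$ of length $k$ is Hurwitz-equivalent to one ending in an adjacent \emph{equal} pair $(u,u)$ whose deletion still leaves a $W$-generating factorization of $g$, necessarily of length $k-2$. Granting this, two further observations finish the step. First, an adjacent equal pair $(u,u)$ multiplies to the identity and, by braiding it through the remaining factors, $u$ can be conjugated by an arbitrary element of $\langle t_1,\dots,t_{k-2}\rangle=W$; since $F$ and $F'$ generate $W$, every such pair can thus be driven to a single fixed representative of its $W$-class. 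Second, the class of the removed pair is read off from $M$, so the complementary data passed to the inductive hypothesis agree for $F$ and $F'$.

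The base case $k=m$ asks that any two shortest $W$-generating reflection factorizations of $g$ sharing the multiset $M$ lie in a single Hurwitz orbit. In the fully reduced sub-case, where a shortest such factorization already has length $\ell_R(g)=\rank(W)$ so that $g$ is a quasi-Coxeter element of $W$, the multiset is forced and this is precisely the transitivity theorem of \cite{BGRW}; the remaining base cases, where a shortest generating factorization is strictly longer than $\ell_R(g)$, are the honest generalization of that result and carry the deepest input.

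I expect two steps to be the main obstacles, both inside the inductive reduction. The first is producing the adjacent equal pair at all: a non-reduced factorization need not visibly contain one, and manufacturing a removable ``free pair'' purely by Hurwitz moves is the principal combinatorial labor. The second, and more delicate, is the generation constraint: conjugating or deleting a pair can shrink the subgroup it helps generate, so one must know that the surrounding factors always carry enough reflections both to permit arbitrary conjugation and to preserve $\langle F\rangle=W$ after deletion. Guaranteeing this is exactly a statement about inclusion-minimal reflection generating sets and their sizes --- the auxiliary lemma announced in the title --- and I anticipate the few Coxeter groups in which minimal generating sets can fail to be of minimum size to furnish the genuinely awkward configurations, alongside the non-quasi-Coxeter base case.
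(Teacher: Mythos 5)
The statement you set out to prove is, in the paper, a \emph{conjecture}: the paper proves it only for parabolic quasi-Coxeter elements (Theorem~\ref{thm: main}), and the general case is open. Your outline correctly identifies the reduction to $H=W$ and the peel-off-pairs induction --- this is essentially the architecture of the paper's proof of Theorem~\ref{thm: main} --- but the two steps you defer as ``obstacles'' are exactly where that architecture depends on $g$ being parabolic quasi-Coxeter, and you supply no replacement. Start with your base case: the ``remaining base cases, where a shortest generating factorization is strictly longer than $\lR(g)$,'' are not a residual sub-case carrying ``the deepest input''; they are the open heart of the problem. The paper's Proposition~\ref{Prop: hurw trans on min full} handles minimum-length full factorizations only because, for parabolic quasi-Coxeter $g$, one has the formula $\ltr(g)=2n-\lR(g)$ and Hurwitz transitivity on reduced factorizations, both of which are \emph{characterizations} of that class (Proposition~\ref{Prop: full length charact of p-q-cox}). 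For a general element neither a formula for $\ltr(g)$ nor any transitivity input is available, and your sketch offers no argument at all here.

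In the inductive step, producing adjacent equal pairs is actually not the principal labor --- Lemma~\ref{lem: LR lemma} does that for arbitrary elements of arbitrary finite Coxeter groups. The genuine difficulties are the ones you name second: (a) arranging that the pair removed from $F$ and the pair removed from $F'$ lie in the same conjugacy class, and (b) showing that deletion leaves a factorization that still generates $W$. For (a) the paper combines the minimum-equals-minimal property with Lemma~\ref{Cor. refl gen sets same conj classes}; note that minimum-equals-minimal \emph{fails} for $I_2(m)$ when $m$ has three distinct prime factors (Proposition~\ref{Prop: min-min dihedral case}), so the dihedral groups must be quarantined and handled by separate means, which your plan does not address. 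For (b) --- in particular when the removable reflection sits among the reduced factors --- the paper's proof of Theorem~\ref{thm: main} establishes fullness of the prefix by invoking Corollary~\ref{Cor: p-q-Cox heredit}: because $g$ is parabolic quasi-Coxeter, any reflection subgroup containing $g$ must contain $W_g$, hence contains the reflection $t'\leq_T g$, hence equals $W$. That heredity statement rests on Proposition~\ref{prop:pqC factorization generates closure} and has no analogue for arbitrary $g$; without it, deleting a pair can silently shrink the generated subgroup and the induction collapses. So your proposal is a faithful reconstruction of the strategy the paper uses for the special case it actually proves, but as a proof of the conjecture itself it is incomplete precisely at the points where the conjecture remains open.
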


In the present paper, our main result (Theorem~\ref{thm: main}) is to show that Conjecture~\ref{lewis conjecture} is valid when the product of the factors belongs to
the class of parabolic quasi-Coxeter elements.  Along the way, we prove a lemma (Lemma~\ref{Lem: classification min=min}) that seems interesting in its own right, classifying the real reflection groups in which every reflection generating set that is \emph{minimal under inclusion} is also \emph{of minimum size}.  In particular, we show (Corollary~\ref{cor:Weyl}) that all Weyl groups have this property. 

The plan of the paper is as follows: in Section~\ref{sec:background}, we review the background on finite Coxeter groups necessary for the remainder of the paper, including the definition and properties of parabolic quasi-Coxeter elements.  In Section~\ref{sec:min}, we show (Lemma~\ref{Lem: classification min=min}) that every minimal generating set of reflections in a finite Coxeter group $W$ is of minimum size, unless $W$ contains as an irreducible factor a dihedral group of order $2m$ where $m$ is divisible by three distinct primes.  Finally, in Section~\ref{sec: main result}, we prove our main result (Theorem~\ref{thm: main}), that Conjecture~\ref{lewis conjecture} is valid whenever the element $g$ is a parabolic quasi-Coxeter element of $W$.

\section{Finite Coxeter groups and parabolic quasi-Coxeter elements}
\label{sec:background}

In this section, we provide background on finite Coxeter groups and their parabolic quasi-Coxeter elements, as necessary for the main results of the paper.  For an in-depth treatment of finite Coxeter groups, the reader may consult the classic references \cite{Humphreys}, \cite{Kane}, \cite{BjornerBrenti}, and \cite{broue_book}.

\subsection{Finite Coxeter groups}

A group $W$ is called a \defn{Coxeter group} if it is generated by a finite set $S:=\{s_1,\ldots,s_n\}$ with a presentation of the following form: for some numbers $m_{ij}$ such that $m_{ij}\in \{2,3,\ldots,\}\cup\{\infty\}$ for $ 1\leq i< j\leq n$,
\[
W=\left\langle s_1,\ldots,s_n \mid s_i^2=1,\ (s_is_j)^{m_{ij}}=1\right\rangle.
\]
We call the elements $s_i\in S$ the \defn{simple generators} of $W$ and we say that the size $n:=|S|$ is the \defn{rank} of $W$; the pair $(W,S)$ will be called a \defn{Coxeter system}. The numbers $m_{ij}$ determine the \defn{Coxeter diagram} associated with the system $(W,S)$; this is the graph with vertex set $[n]:=\{1,\ldots,n\}$ and $m_{ij} - 2$ edges between the vertices $i$ and $j$. If the Coxeter diagram of $(W,S)$ is connected, we say that $W$ is an \defn{irreducible Coxeter group}; every Coxeter group is a direct product of irreducibles.

\subsubsection*{Classification}
In this paper we only work with \defn{finite Coxeter groups}, namely Coxeter groups with finite \emph{cardinality}. These turn out to be precisely the finite subgroups of $\GL(\RR^n)$ generated by Euclidean reflections (the so-called \defn{real reflection groups}) \cite[\S6.4]{Humphreys}. We will denote by $V\cong\RR^n$ the ambient space on which they act. Coxeter classified \cite{coxeter_annals} the irreducible real reflection groups into four infinite families -- $A_n$ (the symmetric groups), $B_n$ (the hyperoctahedral groups of signed permutations), $D_n$ (index-$2$ subgroups of the hyperoctahedral groups), and $I_2(m)$ (the dihedral groups) -- and six exceptional types -- $H_3$, $H_4$, $F_4$, $E_6$, $E_7$, and $E_8$ -- where the indices in all cases correspond to ranks.

We will also be interested (see Corollary~\ref{cor:Weyl}) in a subclass of finite Coxeter groups known as \defn{crystallographic} or \defn{Weyl} groups. In terms of the numbers $m_{ij}$ in the Coxeter presentation, Weyl groups are characterized by having $m_{ij}\in\{2,3,4,6\}$ for all $i,j$. The irreducible Weyl groups are the infinite families $A_n$, $B_n$, and $D_n$, and the five exceptional types $E_6$, $E_7$, $E_8$, $F_4$, and $G_2=I_2(6)$.

\subsubsection*{Reflection factorizations} In a Coxeter group $W$, any element $t\in W$ that is conjugate to some simple generator will be called a \defn{reflection}, and $T$ will denote the set of reflections of $W$. Since $T\subset W$ is a generating set, we may consider the Cayley graph for $W$ with respect to $T$. This determines a natural length function on the elements of the group: the length of an element $g\in W$ is the number of steps in the shortest path between the identity $\id\in W$ and $g$ in the Cayley graph. Equivalently, this may be defined as the smallest number $k$ for which there exist reflections $t_1,\ldots,t_k$ such that $g=t_1\cdots t_k$. We call such factorizations \defn{reduced reflection factorizations}, and the number $\lR(g) := k$ the \defn{reflection length} of $g$. A more common construction in Coxeter groups involves the length function $\ell_S(g)$, where length is calculated in the Cayley graph with respect to the simple generators; we will \emph{not} make use of it here.

The reflection length is subadditive over products: for any $g, h$ one has $\lR(g h) \leq \lR(g) + \lR(h)$.  Thus, it determines a partial order $\leq_T$ (the \defn{absolute order}) on the elements of $W$ via
\[
u \leq_T v \qquad \iff \qquad \lR(u)+\lR(u^{-1}v)=\lR(v).
\]
In other words, $u \leq_{\RRR} v$ if and only if the reduced reflection factorizations of $u$ can be extended to give reduced reflection factorizations of $v$.

We will be particularly interested in a special type of reflection factorization that we introduce now. We say that a reflection factorization $g=t_1\cdots t_N$ is \defn{full} if the factors $t_i$ generate the \emph{full} reflection group, i.e., if $W=\langle t_1,\ldots,t_N\rangle$. Notice that every reflection factorization is trivially full in the group generated by its factors. The \defn{full reflection length} $\ltr(g)$ of $g$ is the minimum length of a full reflection factorization:
\[
\ltr(g) := \min \Big\{ k : \exists\, t_1, \ldots, t_k \in T \text{ such that } t_1 \cdots t_k = g \text{ and } \langle t_1, \ldots, t_k \rangle = W \Big\}.
\]

\subsubsection*{Parabolic subgroups}

For any subset $I\subset S$ of the simple generators $S$, the subgroup $W_I:=\langle I\rangle$ they generate will be called a \defn{standard parabolic subgroup}. Any subgroup $W'\leq W$ conjugate to some $W_I$ will be called a \defn{parabolic subgroup} of $W$. Parabolic subgroups are precisely the pointwise stabilizers $W_U$ of arbitrary subsets $U$ of the ambient space $V$ \cite[\S5.2]{Kane}, and by \cite[Lem.~1]{carter} they are generated by the reflections whose fixed hyperplanes contain $U$. It is easy with this interpretation to see that intersections of parabolic subgroups are also parabolic (in fact the collection of parabolic subgroups forms a lattice). For any element $g\in W$ we write $W_g$ for the \defn{parabolic closure} of $g$ in $W$, namely, the smallest parabolic subgroup that contains $g$. Again relying on the geometric interpretation, it is easy to see that the parabolic closure $W_g$ must equal the pointwise stabilizer $W_{V^g}$ of the fixed space $V^g$.  

The following well known lemma gives a geometric characterization of the absolute order $\leq_T$ ; we will rely on it in the proofs or our main theorems. 

\begin{lemma}[Carter's lemma]
\label{Lem: carter}
Let $W$ be a finite Coxeter group, $g$ any of its elements, and $W_g$ the parabolic closure of $g$. Then $\lR(g)=\codim(V^g)$ and for a reflection $t\in W$, the following are equivalent:
\[ 
(1) \ t\leq_T g \qquad\qquad  (2) \ V^t\supset V^g  \qquad\qquad (3) \ t\in W_g.
\]
\end{lemma}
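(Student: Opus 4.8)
The plan is to prove the length formula $\lR(g)=\codim(V^g)$ first and then extract all three equivalences from it, with the whole argument resting on a single linear-algebra computation describing how the fixed space changes when one multiplies by a suitable reflection. Throughout I write $M_g:=\op{im}(g-\id)$ for the moved space; since $g$ is orthogonal, $M_g$ and $V^g=\ker(g-\id)$ are orthogonal complements, so $\dim M_g=\codim(V^g)$.

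For the lower bound $\codim(V^g)\leq\lR(g)$, I would take any reflection factorization $g=t_1\cdots t_k$, with $t_i$ having root $\alpha_i$ (so $\op{im}(t_i-\id)=\Span(\alpha_i)$). The telescoping identity $g-\id=\sum_i t_1\cdots t_{i-1}(t_i-\id)$ shows $M_g\subseteq\sum_i\Span(t_1\cdots t_{i-1}\alpha_i)$, a space of dimension at most $k$; hence $\codim(V^g)=\dim M_g\leq k$, and minimizing over factorizations gives the bound.

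The crux is the following lemma, which I expect to be the main obstacle: \emph{if $t\in T$ has root $\alpha_t\in M_g$ -- equivalently $V^t\supseteq V^g$ -- then $\dim(V^{tg})=\dim(V^g)+1$.} To prove it, I would write $V^{tg}=\{x:gx=tx\}$ and use $tx=x-2\tfrac{\langle x,\alpha_t\rangle}{\langle\alpha_t,\alpha_t\rangle}\alpha_t$. Because $\alpha_t\in M_g=\op{im}(g-\id)$, the subspace $K:=\{x:(g-\id)x\in\Span(\alpha_t)\}$ has dimension $\dim(V^g)+1$, and $V^{tg}$ is cut out inside $K$ by the single linear condition that the scalar produced by $g-\id$ match the reflection scalar. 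The real work is to check, using orthogonality of $g$ together with $\alpha_t\perp V^g$ (which is exactly what $\alpha_t\in M_g$ says), that this linear condition is \emph{identically} satisfied on $K$ rather than generically nontrivial; this forces $V^{tg}=K$ and loses no dimension. This is the one place where the Euclidean geometry does genuine work and must be done carefully.

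With the lemma available, I would finish as follows. For the upper bound $\lR(g)\leq\codim(V^g)$, induct on $d:=\codim(V^g)$: if $d=0$ then $g=\id$, and if $d>0$ then the parabolic closure $W_g$ is nontrivial (it contains $g$) and is generated by the reflections $t$ with $V^t\supseteq V^g$, so I may choose one such $t$ \emph{inside $W_g$}; the lemma gives $\codim(V^{tg})=d-1$, whence $\lR(g)\leq 1+\lR(tg)\leq d$ by induction. Selecting the reducing reflection within $W_g$ is what guarantees it is an honest reflection of $W$, avoiding the usual worry that $M_g$ might miss the root system. This establishes $\lR(g)=\codim(V^g)$. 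For the equivalences, $(2)\Leftrightarrow(3)$ is immediate from the recalled description of parabolics: $W_g=W_{V^g}$ is the pointwise stabilizer of $V^g$, generated by precisely the reflections whose hyperplane contains $V^g$. The lemma plus the formula give $(2)\Rightarrow(1)$, since $V^t\supseteq V^g$ yields $\lR(tg)=\lR(g)-1$, i.e.\ $\lR(t)+\lR(tg)=\lR(g)$, which is $t\leq_T g$. Finally $(1)\Rightarrow(2)$ follows by a dimension count: if $t\leq_T g$ then $\dim M_{tg}=d-1$, and from $g=t\,(tg)$ one has $M_g\subseteq\Span(\alpha_t)+t\,M_{tg}$, whose right-hand side has dimension at most $d=\dim M_g$; equality forces $\alpha_t\in M_g$, i.e.\ $V^g\subseteq\alpha_t^\perp=V^t$. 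This closes the cycle of implications.
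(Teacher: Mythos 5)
Your proof is correct, but it takes a genuinely different route from the paper. The paper gives no self-contained argument at all: it cites Carter's original Lemma~2 for the length formula $\lR(g)=\codim(V^g)$ (observing that his Weyl-group proof applies verbatim to all finite Coxeter groups), notes that the equivalence $(1)\Leftrightarrow(2)$ is implicit in that same proof (citing also Bessis), and obtains $(2)\Leftrightarrow(3)$ from Steinberg's theorem together with $W_g=W_{V^g}$ --- this last point being the one place where you and the paper argue identically. Carter's own argument, moreover, is organized differently from yours: it runs through his Lemma~3 (a product of reflections is reduced if and only if the associated roots are linearly independent --- the same fact the paper invokes in the proof of Proposition~\ref{Prop: t_1...t_n=w is q-Cox}), whereas yours is in the Brady--Watt style: a telescoping bound for the inequality $\codim(V^g)\leq\lR(g)$, plus a single one-step dimension lemma (multiplying by a reflection whose root lies in $\op{im}(g-\id)$ grows the fixed space by exactly one) from which the reverse inequality and both nontrivial implications $(2)\Rightarrow(1)$ and $(1)\Rightarrow(2)$ all fall out. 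What your route buys is a self-contained, classification-free proof whose content is localized in one linear-algebra lemma; what the paper's route buys is brevity.

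For completeness, the step you flag as ``the real work'' does close exactly with the ingredients you name. For $x\in K$ write $(g-\id)x=c\,\alpha_t$. Orthogonality of $g$ gives $|x|^2=|gx|^2=|x+c\,\alpha_t|^2$, hence $c\bigl(2\langle x,\alpha_t\rangle+c\,|\alpha_t|^2\bigr)=0$. If $c\neq 0$, this is precisely the matching condition $c=-2\langle x,\alpha_t\rangle/|\alpha_t|^2$; if $c=0$, then $x\in V^g$, and $\alpha_t\in\op{im}(g-\id)=(V^g)^{\perp}$ forces $\langle x,\alpha_t\rangle=0$, so the condition holds there too. Hence $V^{tg}=K$, the fixed space grows by exactly one dimension, and the rest of your argument goes through --- including the point, which you correctly isolate, that the reducing reflection must be produced inside $W_g$ via Steinberg's theorem (as recalled in the paper before the lemma), so that it is an honest reflection of $W$ rather than merely an orthogonal reflection of $V$.
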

\begin{proof}
The statement about the length $\lR(g)$ is what is usually known as Carter's lemma \cite[Lem.~2]{carter}. Carter's proof is written for Weyl groups but applies verbatim for finite Coxeter groups. The equivalence of parts $(1)$ and $(2)$ is implicit in the proof of \cite[Lem.~2]{carter}, see also \cite[Lem.~1.2.1]{bessis-dual-braid}. Parts $(2)$ and $(3)$ are equivalent by Steinberg's theorem and since $W_g=W_{V^g}$.
\end{proof}

\subsubsection*{Reflection generating sets}

Let $W$ be a Coxeter group of rank $n$.  Every Coxeter group (finite or otherwise) has a natural  geometric representation on $\RR^n$ in which each reflection fixes a hyperplane.  In this representation, any collection of fewer than $n$ reflections from $W$ have nontrivial common fixed space, and therefore cannot generate the whole group $W$.  

In the case of sets consisting of exactly $n$ reflections, there are a variety of non-obvious restrictions on which sets of reflections can generate.  For example, the following theorem of Wegener--Yahiatene shows that the conjugacy classes of the factors cannot be arbitrary.

\begin{lemma}[{\cite[Lem.~6.4]{WY2}}]
\label{Cor. refl gen sets same conj classes}
In an arbitrary Coxeter group $W$ of rank $n$, all size-$n$ reflection generating sets of $W$ determine the same multiset of conjugacy classes.
\end{lemma}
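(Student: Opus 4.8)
The plan is to reduce to the irreducible case and then argue according to the number of conjugacy classes of reflections. Write $W = W_1 \times \cdots \times W_k$ as a product of irreducibles, with $\rk W_j = n_j$ and $\sum n_j = n$. Every reflection of $W$ has its nontrivial eigenvector inside a single factor, so $T = T_1 \sqcup \cdots \sqcup T_k$, and a reflection subset $R$ generates $W$ if and only if each $R \cap T_j$ generates $W_j$; since $|R \cap T_j| \ge n_j$ while $\sum |R \cap T_j| = n = \sum n_j$, we must have $|R \cap T_j| = n_j$ for every $j$. As conjugacy in a direct product is factorwise, the multiset of $W$-classes of $R$ is the disjoint union of the multisets of $W_j$-classes of the minimum generating sets $R \cap T_j$. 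Thus it suffices to treat irreducible $W$.

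For irreducible $W$, recall that the conjugacy classes of reflections are governed by the abelianization: the quotient map $W \to W^{\mathrm{ab}} = W/[W,W]$ sends every reflection to a nonzero element, two reflections have equal image exactly when they are conjugate, and $W^{\mathrm{ab}} \cong (\ZZ/2\ZZ)^c$, where $c$ is the number of reflection classes and the classes map bijectively to the $c$ standard basis vectors. (Concretely, $c$ is the number of connected components of the subgraph of the Coxeter diagram retaining only the odd-labeled edges.) Consequently, the multiset of conjugacy classes of a generating set $R$ is the same data as the multiset of images of $R$ in $W^{\mathrm{ab}}$, and these images must span $W^{\mathrm{ab}}$. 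When $c = n$ — which for irreducible $W$ happens exactly for $W = I_2(2m)$ (including $G_2 = I_2(6)$) and the trivial $A_1$ — a spanning multiset of $n$ standard basis vectors in $(\ZZ/2\ZZ)^n$ is forced to be the full basis, each appearing once, so $R$ contains exactly one reflection from each class and we are done. When $W$ has a single class of reflections (the simply-laced types, $H_3$, $H_4$, and $I_2(m)$ with $m$ odd) the statement is vacuous. This leaves the irreducible groups with $c = 2 < n$, namely $B_n$ ($n \ge 3$) and $F_4$.

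For these the abelianization forces at least one reflection from each of the two classes but not the exact split, so I would finish geometrically. First, since $W$ is irreducible we have $V^W = 0$, so the roots $\beta_1, \dots, \beta_n$ of a minimum generating set $R$ must span $V$ (otherwise the intersection of their fixed hyperplanes is a nonzero $W$-fixed vector); being $n$ vectors spanning $\RR^n$, they form a basis, and in particular the short roots among them use distinct coordinates. Model $R$ in type $B_n$ by a graph on the vertex set $\{1, \dots, n\}$ of coordinates, placing a mark on vertex $i$ for each short root $\pm e_i$ and an edge $\{i,j\}$ for each long root $\pm e_i \pm e_j$. If this graph were disconnected, the reflections would split into blocks acting on orthogonal coordinate subspaces and $\langle R\rangle$ would be reducible, contradicting the irreducibility of $B_n$; hence the graph is connected, forcing at least $n-1$ edges and therefore at most one mark. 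On the other hand, long roots alone generate a subgroup of $W(D_n) \subsetneq W(B_n)$, so at least one short root must appear. Therefore $R$ consists of exactly one short reflection and $n-1$ long ones, matching the simple system.

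The remaining case $F_4$ is the main obstacle, since there the short roots include the ``half-sum'' roots $\tfrac12(\pm e_1 \pm e_2 \pm e_3 \pm e_4)$ and the clean coordinate-graph bookkeeping of type $B$ is unavailable. Here I would again use that neither class alone generates (the long roots generate a proper $W(D_4)$, as do the short roots), combined with a direct analysis of which rank-$3$ reflection subsystems of $D_4$ can be spanned by three roots of a single class, to rule out the splits $1{+}3$ and $3{+}1$ and leave only the balanced split into two short and two long reflections found in the simple system; in practice this last step is a short finite verification. Assembling the three regimes — single class, $c = n$, and the exceptional families $B_n$ and $F_4$ — completes the proof.
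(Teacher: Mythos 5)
The paper itself contains no proof of this lemma: it is imported wholesale from Wegener--Yahiatene \cite[Lem.~6.4]{WY2}, whose argument is uniform (classification-free) and valid for \emph{arbitrary}, possibly infinite, Coxeter groups. So the comparison here is really between your classification-based argument and that uniform one, and measured against the statement as it is written, your proposal has two genuine gaps.

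First, the lemma is asserted for an arbitrary Coxeter group of rank $n$, but your case analysis is the classification of \emph{finite} irreducible Coxeter groups. Your abelianization step is correct in full generality (two reflections are conjugate if and only if they have the same image in $W^{\mathrm{ab}} \cong (\ZZ/2\ZZ)^c$, with the $c$ reflection classes hitting the $c$ basis vectors), but the trichotomy you feed it into --- $c=1$, or $c=n$, or $W \in \{B_n, F_4\}$ --- is false outside the finite case: for example the affine group $\widetilde{G}_2$ has rank $3$ and $c=2$, and $\widetilde{C}_n$ for $n \geq 3$ has rank $n+1$ and $c=3$, and more generally there are many infinite irreducible Coxeter groups with $1 < c < n$ that your argument never addresses. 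At best you prove the lemma for finite $W$ (which happens to be all this paper needs, but is not the stated lemma, and is weaker than what the citation provides).

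Second, even within the finite case your treatment of $F_4$ is a plan rather than a proof: you propose ruling out the splits $3{+}1$ and $1{+}3$ by ``a direct analysis of rank-$3$ reflection subsystems'' and ``a short finite verification,'' but this verification is not carried out. The gap is real but easily filled, and more cleanly than by enumeration: the long roots of $F_4$ form a sub-root-system stable under the $W(F_4)$-action, so the subgroup $W(D_4)^{\mathrm{long}}$ they generate is \emph{normal} in $W(F_4)$, with quotient isomorphic to $\Symm_3$; long reflections die in this quotient, while short reflections map to transpositions, so a size-$4$ set containing at most one short reflection generates at most the subgroup generated by a single transposition in the quotient and hence cannot generate $W(F_4)$. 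The symmetric argument with the (also normal) short $D_4$ rules out one long and three short. Your type-$B_n$ graph argument, by contrast, is complete and correct, as is the reduction to irreducible factors for finite groups; with the $F_4$ repair above, your proposal would stand as a valid classification-dependent proof of the finite case.
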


\subsection{The Hurwitz action}

As illustrated in \eqref{eq:Hurwitz move},  the Hurwitz move $\sigma_i$ applied to a tuple $(g_1, \ldots, g_k)$ moves the entry $g_{i + 1}$ one position to the left without changing its value.  The inverse \defn{(right) Hurwitz move} is given by
\begin{equation}
    \label{eq:inverse Hurwitz move}
    \sigma_i^{-1} (g_1,\cdots, \quad g_i, \quad g_{i+1}, \quad \cdots,g_k)=(g_1,\cdots, \quad  g_{i}g_{i + 1}g_{i}^{-1}, \quad g_{i}, \quad\cdots,g_k)
\end{equation}
and moves the entry $g_i$ one position to the right, also without changing its value.  By combining such moves, we can move any element of a factorization to any prescribed position (possibly conjugating other elements of the factorization), and more generally move any subsequence of entries to any prescribed set of positions unchanged as long as we do not alter their relative order in the factorization.

We will make use of the following structural lemma for the Hurwitz action on reflection factorizations throughout the paper. The proof given in \cite{LR} relies on the classification of finite Coxeter groups, but recently Wegener and Yahiatene gave a uniform proof \cite{WY2}.

\begin{lemma}[{\cite[Cor.~1.4]{LR}}]
\label{lem: LR lemma}
Let $W$ be a finite Coxeter group and $g \in W$ an element of reflection length $\lR(w)=k$. Then any factorization of $g$ into $N$ reflections (with $N\geq k$) lies in the Hurwitz orbit of some tuple $( t_1,\ldots, t_N)$ such that \[
 t_1 = t_2, \qquad\qquad
 t_3 = t_4, \qquad\qquad \cdots \qquad\qquad 
 t_{N-k-1}  = t_{N-k},
\]
and $( t_{N-k+1}, \ldots, t_N)$ is a reduced factorization of $g$.
\end{lemma}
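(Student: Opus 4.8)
The plan is to induct on the \emph{excess} $N-k$, which is always even: since every reflection acts on $V$ with determinant $-1$, one has $(-1)^N=\det(g)=(-1)^{\lR(g)}=(-1)^k$, so $N\equiv k\pmod 2$. The base case $N=k$ is immediate, as a reflection factorization of length $\lR(g)$ is automatically reduced. For the inductive step the entire statement reduces, via two elementary features of the Hurwitz action, to a single claim: \textbf{Claim.} \emph{If $N>\lR(g)$, then $(s_1,\dots,s_N)$ is Hurwitz-equivalent to a tuple containing two adjacent equal reflections.} The two features are: (i) by \eqref{eq:Hurwitz move} the move $\sigma_i$ transports $g_{i+1}$ one step to the left \emph{without changing its value}; and (ii) a consecutive equal pair slides freely past any single reflection, since a direct computation gives $(r,c,c)\sim(c,c,r)$ with $r$ unchanged. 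Granting the Claim, I first produce an adjacent equal pair $(c,c)$ somewhere, then use (ii) repeatedly to carry it to the front (leaving the remaining entries in their original order), obtaining $(c,c,s_1',\dots,s_{N-2}')$ whose last $N-2$ entries form a reflection factorization of $g$ of excess $N-k-2$; the inductive hypothesis, applied with moves confined to the last $N-2$ coordinates, puts that tail into the desired normal form, and prepending $(c,c)$ completes the step.

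To prove the Claim I would locate the first \emph{descent} of the prefix-length sequence $i\mapsto \lR(s_1\cdots s_i)$. Multiplying by a reflection changes reflection length by exactly $\pm1$, and since this sequence runs from $0$ to $k<N$ there is a least index $j$ with $\lR(s_1\cdots s_j)<\lR(s_1\cdots s_{j-1})$. Then $(s_1,\dots,s_{j-1})$ is a reduced factorization of $h:=s_1\cdots s_{j-1}$, while $t:=s_j$ satisfies $\lR(ht)=\lR(h)-1$; as $t^{-1}(th)t=ht$ the elements $th$ and $ht$ are conjugate and so share reflection length, whence $\lR(t)+\lR(t^{-1}h)=\lR(h)$ gives $t\leq_T h$ (compare Lemma~\ref{Lem: carter}). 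The Claim thus follows from the sub-lemma: \emph{if $(a_1,\dots,a_m)$ is a reduced factorization of $h$ and $t\leq_T h$ is a reflection, then $(a_1,\dots,a_m,t)$ is Hurwitz-equivalent to a tuple containing an adjacent equal pair.} I would prove this by induction on $m$. If $t=a_m$ the pair is already present; if $t\leq_T a_1\cdots a_{m-1}$ (the \emph{non-critical} case), then applying (i) to move $t$ one step left past $a_m$ yields $(a_1,\dots,a_{m-1},t,\,ta_mt)$, whose first $m$ entries satisfy the hypotheses of the sub-lemma with the shorter reduced factorization $(a_1,\dots,a_{m-1})$; the inductive hypothesis produces a pair among those coordinates without disturbing the parked last entry $ta_mt$.

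The main obstacle is the remaining \emph{critical} case, where $t\leq_T h$ but $t\not\leq_T a_1\cdots a_{m-1}$, so that $t$ becomes comparable to the product only upon appending the final factor $a_m$. Here one cannot simply maneuver $t$ itself into a repeated slot: already in the dihedral groups $I_2(2m)$ the Hurwitz action fails to be transitive on the reduced factorizations of a non-Coxeter rotation, so a prescribed reflection below $h$ need not be reachable as a factor, and the reflection that eventually gets duplicated is in general a conjugate of $t$ rather than $t$. My approach would be to fix the prefix $a_1,\dots,a_{m-1}$ and run the Hurwitz action on the last two coordinates through the rank-two group $\langle a_m,t\rangle$, aiming to reach a \emph{non-critical} tuple (one whose appended reflection drops below $a_1\cdots a_{m-1}$), after which the previous case closes the induction. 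Establishing that such a configuration is always reached — equivalently, controlling how the rank-two parabolic $\langle a_m,t\rangle$ meets the reduced factorization of $h$ — is the delicate technical heart; it must respect the invariance of the multiset of conjugacy classes of the factors, and it is precisely the step at which the proof in \cite{LR} invoked the classification of finite Coxeter groups (while the uniform argument of \cite{WY2} replaces this with a case-free analysis).
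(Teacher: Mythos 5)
Your proposal has a genuine gap, and it sits exactly where you say it does. The preliminary reductions are all correct: the parity argument, the first-descent argument producing a reduced prefix $(s_1,\dots,s_{j-1})$ of $h$ with $t=s_j\leq_T h$, the commutation $(r,c,c)\sim(c,c,r)$ that lets you park an adjacent equal pair at the front and induct on the tail, and the non-critical case of your sub-lemma. But these steps are elementary bookkeeping; the entire mathematical content of the statement is the critical case of your sub-lemma --- given a reduced factorization $(a_1,\dots,a_m)$ of $h$ and a reflection $t\leq_T h$ with $t\neq a_m$ and $t\not\leq_T a_1\cdots a_{m-1}$, produce an adjacent equal pair by Hurwitz moves --- and this you do not prove. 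Your plan (run the Hurwitz action on the last two coordinates inside $\langle a_m,t\rangle$ until either the appended reflection or the new $m$-th factor drops below $a_1\cdots a_{m-1}$) is a plausible outline, but showing that such a configuration is always reachable \emph{is} the theorem; no known argument accomplishes it without either the classification (as in \cite{LR}) or the substantial uniform machinery of \cite{WY2}. Note also that $\langle a_m,t\rangle$ is in general only a rank-two reflection \emph{subgroup}, not a parabolic subgroup (compare Example~\ref{ex:I2(30)}), so the geometric control of Carter's lemma (Lemma~\ref{Lem: carter}) is not available for it; this is one concrete reason the step is delicate and why your appeal to ``the rank-two parabolic $\langle a_m,t\rangle$'' is not justified.

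For comparison: the paper itself offers no proof of this lemma --- it is imported wholesale as \cite[Cor.~1.4]{LR}, with the remark that the proof there relies on the classification of finite Coxeter groups and that \cite{WY2} later gave a uniform proof. Your reduction to the critical case is a reasonable reconstruction of how such proofs begin, but as it stands the proposal establishes strictly less than the statement: it would be acceptable as a reduction lemma, not as a proof.
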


\subsection{Parabolic quasi-Coxeter elements}

The products $s_{i_1}\cdots s_{i_n}$ of the simple generators in any order are known as Coxeter elements and they are all conjugate to each other. We will call \emph{any} element in this conjugacy class a \defn{Coxeter element} of $W$. 
The 
Coxeter elements of parabolic subgroups are similarly called (generalized) \defn{parabolic Coxeter elements} of $W$. 
Bessis showed \cite[Prop.~1.6.1]{bessis-dual-braid} that the Hurwitz action on reduced reflection factorizations of parabolic Coxeter elements is transitive. Recently Baumeister et al. \cite{BGRW} studied the Hurwitz action on reduced factorizations of \emph{arbitrary} elements in finite Coxeter groups, which led them to the following generalization of parabolic Coxeter elements. 

\begin{definition}[{Parabolic quasi-Coxeter elements}]
\label{def:pqC}
In a finite Coxeter group $W$, we call an element $g\in W$ a \defn{parabolic quasi-Coxeter element} if it admits a reduced reflection factorization $g=t_1\cdots t_k$ whose factors $t_i$ generate a parabolic subgroup of $W$. If the factors $t_i$ generate the full group $W$, we say that $g$ is a \defn{quasi-Coxeter element}.
\end{definition}

Notice that Definition~\ref{def:pqC} does not specify \emph{which} parabolic subgroup should be generated by the factors $t_i$. However, we see in the next proposition that, in fact, the subgroup is completely determined by the parabolic quasi-Coxeter element $g$.

\begin{proposition}[{\cite[Prop.~2.4.11]{Wegener_thesis} +  \cite[Thm.~1.1]{BGRW}}]
\label{prop:pqC factorization generates closure}
If $W$ is a finite Coxeter group, $g \in W$ a parabolic quasi-Coxeter element, and $g = t_1 \cdots t_k$ a reduced reflection factorization, then $\langle t_1, \ldots, t_k\rangle = W_g$.
\end{proposition}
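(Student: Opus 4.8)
The plan is to establish the equality $\langle t_1, \ldots, t_k\rangle = W_g$ by proving the two inclusions separately, isolating the one elementary half from the one that carries the genuine content.

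First I would dispose of the inclusion $\langle t_1, \ldots, t_k\rangle \subseteq W_g$, which in fact holds for \emph{every} reduced reflection factorization of $g$ and does not use that $g$ is parabolic quasi-Coxeter. In a reduced factorization $g = t_1 \cdots t_k$ each factor satisfies $t_i \leq_T g$, since a single factor can be carried to the front of the factorization by Hurwitz moves (preserving both the product and the property of being reduced), exhibiting it as the initial reflection of a reduced factorization of $g$ and hence below $g$ in the absolute order. Carter's lemma (Lemma~\ref{Lem: carter}) then gives $t_i \in W_g$ for each $i$, so $\langle t_1, \ldots, t_k\rangle \subseteq W_g$.

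Next I would pin down the distinguished factorization guaranteed by the hypothesis. By Definition~\ref{def:pqC}, $g$ admits at least one reduced factorization $g = t_1' \cdots t_k'$ whose factors generate a parabolic subgroup $P := \langle t_1', \ldots, t_k'\rangle$. The previous paragraph already yields $P \subseteq W_g$, while $g = t_1'\cdots t_k' \in P$ together with the minimality of $W_g$ among parabolic subgroups containing $g$ forces $W_g \subseteq P$; hence $P = W_g$, so this particular factorization generates precisely $W_g$.

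Finally I would upgrade from the distinguished factorization to an arbitrary one. Because $g$ is parabolic quasi-Coxeter, the classification of Baumeister et al.\ \cite{BGRW} ensures that the Hurwitz action is transitive on the reduced reflection factorizations of $g$. As the Hurwitz action visibly preserves the subgroup generated by the factors of a tuple, every reduced factorization $g = t_1 \cdots t_k$ lies in the Hurwitz orbit of the distinguished one and therefore generates the same subgroup $W_g$, which combined with the first inclusion gives the claim. I expect this last step to be the crux: the elementary arguments only trap the generated subgroup between $g$ and $W_g$, and a quick rank count shows every reduced factorization generates a reflection subgroup $H$ with $V^H = V^g$ and thus $\rank(H) = \codim(V^g) = k$; but equal rank alone does not force equality of reflection subgroups (witness $D_n \subset B_n$), so some global input beyond reflection length is unavoidable, and the transitivity of the Hurwitz action supplies exactly that. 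Absent the citation to \cite{BGRW}, one would have to reprove that transitivity, or equivalently that all reduced factorizations of a quasi-Coxeter element generate the ambient group, which is the real difficulty concealed by the proposition.
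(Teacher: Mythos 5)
Your proposal is correct and takes essentially the same approach as the paper: the paper gives no self-contained argument, attributing the proposition to Wegener's thesis together with \cite[Thm.~1.1]{BGRW}, and your argument is precisely the natural unpacking of those citations --- Carter's lemma (Lemma~\ref{Lem: carter}) and minimality of the parabolic closure for the two elementary inclusions, with the Hurwitz-transitivity theorem of Baumeister et al.\ (the equivalence $(1)\leftrightarrow(2)$ of Proposition~\ref{Prop: full length charact of p-q-cox}) supplying the genuine content. You also rightly observe that this external input cannot be avoided, since the elementary arguments only pin down the generated subgroup up to rank, and reflection subgroups of equal rank need not coincide.
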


The following proposition may be seen as a characterization for \emph{quasi-Coxeter} elements, analogous to the characterization of Coxeter elements as products of the reflections in a simple system. 

\begin{proposition}
\label{Prop: t_1...t_n=w is q-Cox}
Let $W$ be a finite Coxeter group of rank $n$, and $\{t_1,\ldots,t_n\}\subset W$ a generating set of reflections. Then the product $w=t_1\cdots t_n$ (in any order) is a quasi-Coxeter element.
\end{proposition}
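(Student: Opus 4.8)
The plan is to verify directly the two defining conditions of a quasi-Coxeter element from Definition~\ref{def:pqC}: that the factors generate the full group $W$, and that the factorization $w = t_1\cdots t_n$ is \emph{reduced}. The first is exactly the hypothesis $\langle t_1,\ldots,t_n\rangle = W$, so there is nothing to do there. For the second, since $w$ is exhibited as a product of $n$ reflections we automatically have $\lR(w)\le n$, and it suffices to prove $\lR(w) = n$. By Carter's lemma (Lemma~\ref{Lem: carter}) we have $\lR(w) = \codim(V^w)$, and because the geometric representation of a rank-$n$ finite Coxeter group on $V\cong\RR^n$ is essential (the rank equals $\dim V$), the condition $\codim(V^w) = n$ is equivalent to $V^w = \{0\}$. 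Thus the whole proposition reduces to showing that the product of a generating set of $n$ reflections has trivial fixed space.

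First I would record that the roots of the $t_i$ form a basis of $V$. Write $\alpha_i$ for a root of $t_i$, so that its reflecting hyperplane is $V^{t_i} = \alpha_i^{\perp}$. A vector lies in $\bigcap_{i=1}^n V^{t_i}$ if and only if it is fixed by every $t_i$, hence by all of $W = \langle t_1,\ldots,t_n\rangle$; since the representation is essential this common fixed space is $V^W = \{0\}$. Therefore $\bigcap_i \alpha_i^{\perp} = \{0\}$, i.e.\ the $n$ vectors $\alpha_1,\ldots,\alpha_n$ span $V$, and being $n$ vectors in an $n$-dimensional space they are linearly independent.

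The heart of the argument is then a telescoping computation establishing $V^w = \{0\}$. I would take $v\in V^w$ and set $v^{(n+1)} := v$ and $v^{(j)} := t_j t_{j+1}\cdots t_n\, v$ for $1\le j\le n$, so that $v^{(j)} = t_j v^{(j+1)}$ and $v^{(1)} = w v = v = v^{(n+1)}$. Since a reflection alters a vector only by a scalar multiple of its root, we have $v^{(j)} - v^{(j+1)} = t_j v^{(j+1)} - v^{(j+1)} = -c_j\,\alpha_j$ for some scalar $c_j$, with $c_j = 0$ precisely when $t_j$ fixes $v^{(j+1)}$. Summing over $j$ telescopes to $v^{(1)} - v^{(n+1)} = 0$, so $\sum_{j=1}^n c_j\,\alpha_j = 0$; by the linear independence just established, every $c_j$ vanishes. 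Hence each $v^{(j)} = v^{(j+1)}$, so all the $v^{(j)}$ equal $v$, and $c_j = 0$ now unwinds to $\langle v, \alpha_j\rangle = 0$ for every $j$. Thus $v\in\bigcap_i\alpha_i^{\perp} = \{0\}$, giving $V^w = \{0\}$ and therefore $\lR(w) = n$. Because this computation uses only the (order-independent) linear independence of the root set together with telescoping, it applies verbatim to $w = t_{\sigma(1)}\cdots t_{\sigma(n)}$ for any ordering $\sigma$, which handles the ``in any order'' clause.

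I expect the main obstacle to be the reducedness claim $\lR(w)=n$ rather than the generation claim: a priori the product of $n$ reflections can collapse to smaller reflection length, and ruling this out is exactly Carter's linear-independence criterion, whose proof is the telescoping step above. The two points requiring care are ensuring the argument is genuinely order-free, and taking care not to inadvertently assume that the individual factors $t_i$ lie below $w$ in the absolute order $\leq_T$ --- this is false in general for non-reduced factorizations, but it is never needed here.
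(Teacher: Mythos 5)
Your proposal is correct, and it shares its skeleton with the paper's proof: both arguments hinge on showing that the roots $\alpha_1,\ldots,\alpha_n$ of a generating set of $n$ reflections are linearly independent (your justification — the common fixed space of the $t_i$ is $V^W=\{0\}$ by essentiality, so the roots span — is the same one the paper gives in contrapositive form). Where you diverge is in how reducedness is extracted from linear independence. The paper finishes in one stroke by citing \cite[Lem.~3]{carter}, which states that a product of reflections in a finite Coxeter group is reduced if and only if the associated roots are linearly independent. You instead prove the needed direction of that lemma from scratch: your telescoping computation shows that any $v\in V^w$ satisfies $\sum_j c_j\alpha_j=0$ with $c_j$ measuring the displacement by $t_j$, so linear independence forces every $c_j=0$, unwinding to $v\in\bigcap_j\alpha_j^\perp=\{0\}$; then $\codim(V^w)=n$ and the codimension formula of Lemma~\ref{Lem: carter} gives $\lR(w)=n$. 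In effect you have inlined the proof of the cited black box (this telescoping argument is the standard proof of the easy direction of Carter's Lemma 3, cf.\ \cite[Lem.~1.2.1]{bessis-dual-braid}). What your route buys is self-containedness — it only requires the codimension formula already stated in the paper, not the stronger linear-independence criterion — at the cost of a paragraph of computation; the paper's citation is shorter but asks the reader to accept more. Your observation that the argument is manifestly order-independent, and your caution about not assuming $t_i\leq_T w$, are both apt; the paper handles the ``in any order'' clause identically, since linear independence of the root set does not depend on the ordering of the factors.
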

\begin{proof}
The roots associated to a $n$-element generating set of reflections must be linearly independent; indeed, if they were not, the fixed spaces of the reflections would have nontrivial intersection.  Then the result is an immediate corollary of \cite[Lem.~3]{carter}, which asserts that a product of reflections in a finite Coxeter group is reduced if and only if the associated roots are linearly independent.  
\end{proof}

\subsubsection*{Characterization of parabolic quasi-Coxeter elements}
Our main result (Theorem~\ref{thm: main}) addresses Conjecture~\ref{lewis conjecture} for the class of parabolic quasi-Coxeter elements. We give below three different characterizations of parabolic quasi-Coxeter elements that we will rely on in Section~\ref{sec: main result}.

\begin{proposition}\label{Prop: full length charact of p-q-cox}
In a finite Coxeter group $W$ of rank $n$ with an element $g\in W$, the following are equivalent.
\begin{enumerate}
    \item The element $g\in W$ is a parabolic quasi-Coxeter element.
    \item The Hurwitz action on reduced reflection factorizations of $g$ is transitive.
    \item There exists a quasi-Coxeter element $w\in W$ such that $g\leq_T w$.
    \item The full reflection length of $g$ is given by $\displaystyle \ltr(g)=2n-\lR(g). $
\end{enumerate}
\end{proposition}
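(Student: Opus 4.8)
The plan is to take the equivalence (1)$\Leftrightarrow$(2) as given and to arrange the rest as the cycle (1)$\Rightarrow$(4)$\Rightarrow$(3)$\Rightarrow$(1). The equivalence of (1) and (2) is exactly the classification theorem of \cite{BGRW} quoted in the introduction, so I would cite it rather than reprove it. The first genuinely new ingredient is a lower bound on the full reflection length valid for \emph{every} element: applying Lemma~\ref{lem: LR lemma} with $k=\lR(g)$, any full factorization of $g$ into $N$ reflections is Hurwitz-equivalent to a tuple consisting of $\tfrac{N-k}{2}$ repeated pairs followed by a reduced factorization $(t_{N-k+1},\dots,t_N)$ of $g$. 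Because the Hurwitz action preserves the subgroup generated by a tuple, the $\tfrac{N-k}{2}$ pair-representatives together with the $k$ factors of the reduced factorization must generate $W$; since a reflection generating set of a rank-$n$ group has at least $n$ elements, this forces $\tfrac{N-k}{2}+k\ge n$, i.e.\ $N\ge 2n-k$. Thus $\ltr(g)\ge 2n-\lR(g)$ unconditionally, and condition (4) is precisely the statement that this bound is attained.

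For (1)$\Rightarrow$(4) (which will also yield (3)) I would begin with a reduced factorization $g=t_1\cdots t_k$ whose factors generate the parabolic closure $W_g$, furnished by Proposition~\ref{prop:pqC factorization generates closure}, and extend $\{t_1,\dots,t_k\}$ to an $n$-element reflection generating set of $W$. This extension is the one structural point, and I would obtain it by conjugating $W_g$ to a standard parabolic: writing $W_g=uW_Iu^{-1}$ with $|I|=k$, the set $\{t_1,\dots,t_k\}\cup u(S\setminus I)u^{-1}$ has $n$ elements and generates $\langle W_g,\,u(S\setminus I)u^{-1}\rangle=u\langle W_I,\,S\setminus I\rangle u^{-1}=W$. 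The roots of any $n$-element reflection generating set are linearly independent (otherwise the reflections would fix a common nonzero vector, impossible in the essential group $W$), so by Carter's Lemma~\ref{Lem: carter} the product of these $n$ reflections in the order $t_1\cdots t_k(\text{added reflections})$ is reduced; padding the $n-k$ added reflections into repeated pairs then gives a full factorization of $g$ of length $2n-k$, establishing (4). The same linear independence and Proposition~\ref{Prop: t_1...t_n=w is q-Cox} show that this product $w$ is a quasi-Coxeter element with $g\leq_T w$, which is (3). Conversely, for (4)$\Rightarrow$(3) I would run the lower-bound computation backwards, reading off from a length-$(2n-k)$ full factorization a reduced factorization $g=r_1\cdots r_k$ and $n-k$ pair-representatives that jointly generate $W$, and then invoke Proposition~\ref{Prop: t_1...t_n=w is q-Cox} and Carter's lemma exactly as above to manufacture a quasi-Coxeter $w\geq_T g$.

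The hard part is (3)$\Rightarrow$(1): from the knowledge that $g$ lies below some quasi-Coxeter $w$ I must produce a reduced factorization of $g$ generating a parabolic subgroup. The difficulty is that $\lR(g)=\rank(W_g)$ is \emph{not} sufficient for $g$ to be a parabolic quasi-Coxeter element --- the element $-\mathbf 1$ of a $B_2$ factor has reflection length equal to its rank but admits no parabolic-generating factorization --- so the hypothesis that $w$ is genuinely quasi-Coxeter must be used essentially, and the obvious prefix $g=t_1\cdots t_k$ of a reduced factorization $w=t_1\cdots t_n$ need not itself generate $W_g$. My plan is to induct on the corank $n-\lR(g)$. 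The base case $\lR(g)=n$ gives $g=w$, which is quasi-Coxeter. For the inductive step I would set $g^+:=t_1\cdots t_{k+1}$, which has corank one less and satisfies $g^+\leq_T w$, hence is a parabolic quasi-Coxeter element by the inductive hypothesis; then $g\leq_T g^+$ with $g^+$ a quasi-Coxeter element of the strictly smaller parabolic subgroup $W_{g^+}$, in which $g$ has corank one. Since a parabolic quasi-Coxeter element of the parabolic subgroup $W_{g^+}$ is automatically one of $W$ (a parabolic subgroup of a parabolic subgroup is parabolic), this reduces everything to the corank-one kernel.

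This corank-one kernel --- that a length-$(n-1)$ prefix of a quasi-Coxeter element of a rank-$n$ group is a parabolic quasi-Coxeter element --- is where I expect the real work, and where the quasi-Coxeter hypothesis is indispensable. I would attack it by showing that Hurwitz transitivity on the reduced factorizations of $w$ (available since $w$ is quasi-Coxeter, via (1)$\Leftrightarrow$(2)) descends to the prefixes: completing two reduced factorizations of $g$ by a common reduced factorization of $g^{-1}w$ yields two reduced factorizations of $w$ in one Hurwitz orbit, and the goal is to promote the connecting braid to one acting only on the first $n-1$ strands, so that some reduced factorization of $g$ is carried to one whose factors generate $W_g$. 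Controlling this ``relative'' Hurwitz action --- equivalently, proving the descent of transitivity along $\leq_T$ --- is the main obstacle of the whole proposition; if a fully self-contained argument proves elusive I would fall back on the corresponding statements in \cite{BGRW} and \cite{Wegener_thesis}.
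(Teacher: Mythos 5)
The paper's own ``proof'' of this proposition is pure citation: $(1)\Leftrightarrow(2)$ is \cite[Thm.~1.1]{BGRW}, $(1)\Leftrightarrow(3)$ is \cite[Cor.~6.11]{BGRW}, and $(1)\Leftrightarrow(4)$ is \cite[Thm.~5.8]{DLM2}. Your proposal therefore takes a genuinely different route, and the parts you actually prove are correct. The unconditional lower bound $\ltr(g)\ge 2n-\lR(g)$ is sound: Lemma~\ref{lem: LR lemma} plus invariance of the generated subgroup under Hurwitz moves, plus the fact that fewer than $n$ reflections cannot generate a rank-$n$ group, gives exactly $\tfrac{N-k}{2}+k\ge n$. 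Your $(1)\Rightarrow(4)$ is also correct: Proposition~\ref{prop:pqC factorization generates closure} gives a reduced factorization generating $W_g=uW_Iu^{-1}$ with $|I|=\lR(g)$ (the rank count follows from Lemma~\ref{Lem: carter}), the adjoined reflections $u(S\setminus I)u^{-1}$ complete it to an $n$-element generating set, linear independence of the roots gives reducedness of the product as in the proof of Proposition~\ref{Prop: t_1...t_n=w is q-Cox}, and padding by repeated pairs attains the lower bound; the same construction yields $(3)$. Likewise $(4)\Rightarrow(3)$, reading a quasi-Coxeter element off a minimum-length full factorization via Lemma~\ref{lem: LR lemma}, is fine. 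So you obtain self-contained proofs of the implications the paper outsources to \cite{DLM2}, from lemmas already stated in the paper --- a real gain.

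The weak point is exactly where you flag it: $(3)\Rightarrow(1)$. Your induction on corank is valid (including the reductions via ``parabolic of a parabolic is parabolic'' and the equality of reflection lengths in $W$ and in $W_{g^+}$), but the corank-one kernel --- a length-$(n-1)$ prefix of a reduced factorization of a quasi-Coxeter element is parabolic quasi-Coxeter --- is not a technical lemma one can expect to dispatch; it is essentially the full strength of the main theorem of \cite{BGRW} (their Thm.~6.1), whose proof requires lattice-theoretic arguments in the Weyl-group cases and computer verification in the exceptional types. Your sketched strategy (completing factorizations of $g$ by $g^{-1}w$ and ``promoting'' the connecting braid to the first $n-1$ strands) is a restatement of what must be proven, not a method for proving it. Consequently the fallback to citation is not optional but forced, and once you invoke \cite{BGRW} for the kernel, the corank induction buys nothing: you may as well cite \cite[Cor.~6.11]{BGRW} for all of $(3)\Rightarrow(1)$, which is precisely what the paper does. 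With that citation your write-up is correct; as a self-contained argument it has a genuine gap at exactly this point.
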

\begin{proof}
This is a combination of known results. The equivalence $(1)\sim (2)$ is \cite[Thm.~1.1]{BGRW}, while $(1)\sim (3)$ is given as \cite[Cor.~6.11]{BGRW}, and finally $(1)\sim (4)$ is \cite[Thm.~5.8]{DLM2}.
\end{proof}

We record here a corollary of the characterization that is implicit in the works \cite{BGRW} and \cite{gobet-cycle}, and of which we will also make use later on. 

\begin{corollary}\label{Cor: p-q-Cox heredit}
In a finite Coxeter group $W$ with a parabolic quasi-Coxeter element $g\in W$, if $W'\leq W$ is \emph{any} reflection subgroup that contains $g$, then $W_g\leq W'$ and $g$ is parabolic quasi-Coxeter in $W'$.
\end{corollary}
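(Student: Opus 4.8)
The plan is to combine Carter's lemma (Lemma~\ref{Lem: carter}) with Proposition~\ref{prop:pqC factorization generates closure}, the key point being that reflection length can be computed identically inside $W$ and inside $W'$ because both are finite real reflection groups acting on the common ambient space $V$. First I would note that a reflection subgroup $W' \leq W$ is itself a finite real reflection group, hence a finite Coxeter group, so all the cited results apply to it; its reflections form a subset of $T$, which a priori only yields the inequality $\lR[W'](g) \geq \lR(g)$. Applying Carter's lemma to each group — and observing that the fixed space $V^g$ is computed in the shared representation $V$ and so is the same for both — gives $\lR[W'](g) = \codim(V^g) = \lR(g) =: k$.

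Next I would take any reduced reflection factorization $g = t_1 \cdots t_k$ with each $t_i$ a reflection of $W'$. By the length equality just established, this is also a reduced reflection factorization of $g$ in $W$, so Proposition~\ref{prop:pqC factorization generates closure} forces $\langle t_1, \ldots, t_k \rangle = W_g$. Since every $t_i$ lies in $W'$, this immediately gives $W_g \leq W'$, which is the first assertion.

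For the second assertion I would first show that $W_g$ is a \emph{parabolic} subgroup of $W'$ (not merely of $W$). Using the identity $W_g = W_{V^g}$ together with $W_g \leq W'$, the pointwise stabilizer in $W'$ of the subspace $V^g$ equals $W' \cap W_{V^g} = W' \cap W_g = W_g$; since the parabolic subgroups of the ambient reflection group $W'$ are exactly the pointwise stabilizers of subsets of $V$, this exhibits $W_g$ as a parabolic subgroup of $W'$. Finally, the factorization $g = t_1 \cdots t_k$ from the previous step is reduced in $W'$ and its factors generate $W_g$, a parabolic subgroup of $W'$, so by Definition~\ref{def:pqC} the element $g$ is a parabolic quasi-Coxeter element of $W'$.

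The main obstacle is the length comparison in the first step: everything downstream depends on upgrading the trivial inequality $\lR[W'](g) \geq \lR(g)$ to an equality, since otherwise a $W'$-reduced factorization of $g$ need not be $W$-reduced and Proposition~\ref{prop:pqC factorization generates closure} could not be invoked. This upgrade works only because Carter's lemma is available uniformly for all finite Coxeter groups — including the possibly reducible subgroup $W'$ — and is phrased intrinsically in terms of the codimension of the fixed space in the common representation $V$.
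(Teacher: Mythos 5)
Your proof of the first assertion ($W_g \leq W'$) is essentially the paper's own: Carter's lemma (Lemma~\ref{Lem: carter}) applied to both $W$ and $W'$, via the common fixed space $V^g$, upgrades a $W'$-reduced factorization to a $W$-reduced one, and Proposition~\ref{prop:pqC factorization generates closure} then identifies the subgroup generated by its factors with $W_g$. For the second assertion, however, you take a genuinely different route. The paper stays inside the Hurwitz framework: since every $W'$-reduced factorization of $g$ is also $W$-reduced, the transitivity characterization of Proposition~\ref{Prop: full length charact of p-q-cox}~[$(1)\leftrightarrow(2)$] applied to the pair $(W,g)$ shows that all these factorizations lie in a single Hurwitz orbit, and reading the same equivalence backwards for the pair $(W',g)$ finishes the proof --- notably without ever identifying \emph{which} parabolic subgroup of $W'$ the factors generate. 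You instead verify Definition~\ref{def:pqC} directly, showing that $W_g$ is itself a parabolic subgroup of $W'$: the pointwise stabilizer in $W'$ of $V^g$ is $W' \cap W_{V^g} = W' \cap W_g = W_g$ once $W_g \leq W'$ is known, and parabolic subgroups of a finite reflection group are exactly such stabilizers. Your route buys a sharper conclusion (that $W_g$ is literally the parabolic closure of $g$ inside $W'$) and replaces a second invocation of the deep equivalence $(1)\leftrightarrow(2)$ with elementary geometry; its only extra cost is the observation that the stabilizer characterization of parabolics, which the paper cites for $W$ acting essentially on $V$, also applies to the possibly non-essential action of $W'$ on $V$ --- a point worth one sentence (split off $V^{W'}$, on which $W'$ acts trivially), but standard and correct. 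Both arguments are valid.
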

\begin{proof}
Let's start with a reduced reflection factorization $g=t_1\cdots t_k$ in $W'$. By Carter's lemma (Lemma~\ref{Lem: carter}), we have that $\lR[W'](g)=\codim(V^g)=\lR(g)$, so that this is also a reduced $W$-reflection factorization of $g$. By Proposition~\ref{prop:pqC factorization generates closure}, this forces the equality $\langle t_1,\ldots,t_k\rangle=W_g$, and therefore $W_g\leq W'$. 

Similarly, \emph{all} reduced $W'$-reflection factorizations of $g$ are also reduced in $W$.  Since $g$ is parabolic quasi-Coxeter in $W$, Proposition~\ref{Prop: full length charact of p-q-cox}~[$(1)\leftrightarrow (2)$] for the pair $(W,g)$ implies that they must all be Hurwitz equivalent. Applying Proposition~\ref{Prop: full length charact of p-q-cox}~[$(1)\leftrightarrow (2)$] again but now in the opposite direction and for the pair $(W',g)$ completes the proof.
\end{proof}

\section{Generating sets of reflections: minimum versus minimal}
\label{sec:min}

Let $W$ be a finite Coxeter group and $X$ a set of reflections that generates $W$.  We say that $X$ is a \defn{minimal generating set} if no proper subset of $X$ generates $W$.  For example, since every Coxeter group of rank $n$ may be generated by $n$ reflections but not fewer, every generating set of $n$ reflections is minimal.  However, there may be minimal generating sets of larger size.

\begin{example}[{\cite[Ch.~1, Exer.~6]{BjornerBrenti}}]
\label{ex:I2(30)}
Consider the dihedral group $I_2(30)$, of order $2 \times 30$, acting on $\RR^2$.  Let $R := \{r_0, r_2, r_{-3}\}$ be the set containing the following three reflections (illustrated in Figure~\ref{fig:I2(30)}): $r_0$ is the reflection across the $x$-axis, $r_2$ is the reflection across the line $y = \tan(2\pi/30) x$, and $r_{-3}$ is the reflection across the line $y = \tan(-3\pi/30) x$.

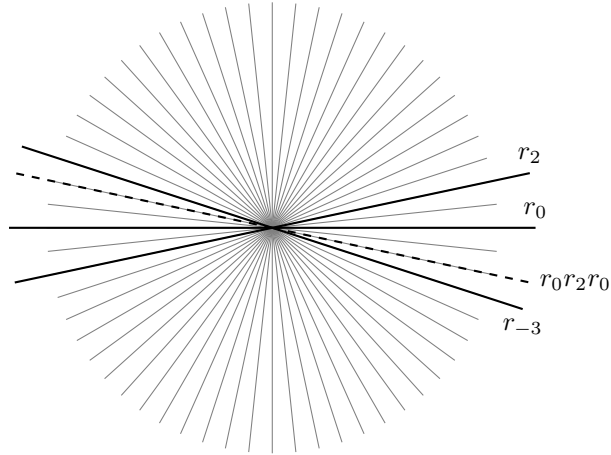
\begin{figure}
    \begin{tikzpicture}
        \foreach \x in {1,...,59}
            \draw[gray] (0,0) -- (6*\x:3cm);
        \draw[thick] (0,0) -- (0:3.5cm) node[above] {$r_0$};
        \draw[thick] (0,0) -- (0:-3.5cm);
        \draw[thick] (0,0) -- (12:3.5cm)node[above] {$r_2$};
        \draw[thick] (0,0) -- (12:-3.5cm);
        \draw[thick] (0,0) -- (-18:3.5cm)node[below] {$r_{-3}$};
        \draw[thick] (0,0) -- (-18:-3.5cm);
        \draw[thick, dashed] (0,0) -- (-12:3.5cm)node[right] {$r_0r_{2}r_0$};
        \draw[thick, dashed] (0,0) -- (-12:-3.5cm);
    \end{tikzpicture}
\caption{The reflecting lines associated with the group $I_2(30)$, as in Example~\ref{ex:I2(30)}.}
     \label{fig:I2(30)}
\end{figure}

It is easy to see that $R$ generates $I_2(30)$: for example, the pair of reflections $r_{-3}$ and $r_0r_{2}r_0$ have fixed lines that make an angle of $\pi/30$ and hence they generate $I_2(30)$.  It is also easy to see that the subset $\{r_0, r_2\}$ generates a subgroup of order $2 \times 15$, the subset $\{r_0, r_{-3}\}$ generates a subgroup of order $2 \times 10$, and the subset $\{r_2, r_{-3}\}$ generates a subgroup of order $2 \times 6$.  Thus $R$ is a minimal generating set of reflections, even though it is not (and does not contain) a \emph{mimumum-size} generating set of reflections.
\end{example}

This example motivates the following definition.

\begin{definition}\label{Defn: minimum=minimal}
Say that a finite Coxeter group $W$ of rank $n$ satisfies the \defn{minimum-equals-minimal property} if any set $X$ of reflections of $W$ that generates $W$ contains a subset $\{t_1,\ldots,t_n\}\subset X$ of exactly $n$ reflections that generates $W$.
Equivalently, the property is that every minim\emph{al} generating set of reflections is actually a generating set of minim\emph{um} size.  
\end{definition}

Since the reflections of $W_1 \times W_2$ are the union of the reflections of $W_1$ and the reflections of $W_2$, a finite Coxeter group has the minimum-equals-minimal property if and only if each of its irreducible factors also has the property.  Thus, it suffices to consider the case of irreducible groups.  In the next two results, we classify the irreducible finite Coxeter groups with the minimum-equals-minimal property, beginning with a generalization of Example~\ref{ex:I2(30)}.

\begin{proposition}\label{Prop: min-min dihedral case}
If $W$ is the dihedral group $I_2(m)$ and $m$ has at least three distinct prime factors, then $W$ does not satisfy the minimum-equals-minimal property.
\end{proposition}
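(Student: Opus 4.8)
The plan is to work directly with the standard parametrization of the reflections of $I_2(m)$ and reduce the statement to an elementary number-theoretic construction. Label the $m$ reflections of $I_2(m)$ by $\ZZ/m\ZZ$, writing $t_k$ for the reflection whose mirror makes angle $k\pi/m$ with a fixed axis, so that $t_i t_j$ is the rotation of order $m/\gcd(i-j,\,m)$. A first step is to record the generation criterion that makes everything combinatorial: a set $\{t_{k_1},\dots,t_{k_\ell}\}$ generates the subgroup $I_2\!\left(m/\gcd(k_2-k_1,\dots,k_\ell-k_1,\,m)\right)$, and in particular a pair $\{t_i,t_j\}$ generates all of $I_2(m)$ if and only if $\gcd(i-j,m)=1$. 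Granting this, a minimal-but-not-minimum generating set is exactly a set of three reflections that together generate $I_2(m)$ while no two of them do.

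After this reduction I would set $t_{a_1}=t_0$ and seek two further indices $a_2,a_3$, recording the three pairwise differences $d_{12}=a_2$, $d_{13}=a_3$, and $d_{23}=a_3-a_2$. By the criterion, the requirements become purely arithmetic: $\gcd(d_{12},m)>1$, $\gcd(d_{13},m)>1$, and $\gcd(d_{23},m)>1$ (so that no pair generates), together with $\gcd(d_{12},d_{13},m)=1$ (so that the triple does). The construction uses three distinct primes $p_1,p_2,p_3$ dividing $m$: using the Chinese Remainder Theorem I would choose $d_{12}$ and $d_{13}$ modulo $\rad(m)$ by setting $d_{12}\equiv 0 \pmod{p_1}$ and $d_{13}\equiv 0\pmod{p_2}$, and declaring $d_{12}\equiv d_{13}\equiv 1$ at every other prime divisor of $m$ (including $p_3$). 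Then $p_1\mid d_{12}$, $p_2\mid d_{13}$, and since $d_{23}=d_{13}-d_{12}\equiv 0\pmod{p_3}$ also $p_3\mid d_{23}$, so all three pairwise gcds with $m$ exceed $1$; meanwhile no prime of $m$ divides both $d_{12}$ and $d_{13}$ (the primes $p_1,p_2$ are excluded by one factor each, and every other prime divides neither), so $\gcd(d_{12},d_{13},m)=1$. Hence $\{t_0,t_{d_{12}},t_{d_{13}}\}$ generates $I_2(m)$ but no proper subset does, making it a minimal generating set of size three, strictly larger than the minimum size two, and $I_2(m)$ fails the minimum-equals-minimal property.

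The main obstacle --- and the reason the hypothesis of three distinct primes is exactly right --- is controlling the third difference $d_{23}$ simultaneously with the other two. A naive attempt such as $d_{12}=p_1,\ d_{13}=p_2$ makes $d_{23}=p_2-p_1$, whose gcd with $m$ one cannot control; and the tempting choice of making $d_{12}$ divisible by every prime of $m$ except $p_1$ (and $d_{13}$ by every prime except $p_2$) fails the overall-gcd condition as soon as $m$ has a fourth prime factor, which then divides both. The CRT normalization ``$\equiv 1$ at all remaining primes'' resolves both difficulties at once: it forces $d_{23}$ to be divisible by $p_3$ while keeping $d_{12}$ and $d_{13}$ coprime at every prime of $m$ outside $\{p_1,p_2\}$, so that the extra prime factors of $m$ cause no trouble. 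The only facts needed beyond this are the elementary generation criterion for dihedral groups stated above, which I would derive in a sentence from the order formula for $t_it_j$.
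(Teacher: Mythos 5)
Your proposal is correct and follows essentially the same approach as the paper: both reduce generation of reflection subgroups of $I_2(m)$ to gcd conditions on integer parameters attached to the three reflections, and then apply the Chinese Remainder Theorem so that a distinct prime ``blocks'' each pair while the triple gcd stays $1$. The differences are cosmetic: you index mirrors by $\ZZ/m\ZZ$ and work with index differences (so the third parameter is automatically $d_{13}-d_{12}$, avoiding the paper's normalization $A_{12}+A_{13}+A_{23}=2m$ and its attendant case split), and you derive the generation criterion directly from the rotation-order formula rather than citing it.
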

\begin{proof}
As discussed in \cite[\S3.2]{LR},\footnote{However, the reader should note a small error in the discussion there: \cite{LR} omit the final ``$m$'' in the GCD, writing the condition as $\gcd(A_{12}, A_{13}, A_{23}) = 1$.  One can see the failure of this version already in the case $m = 3$, when three equally spaced vectors give $A_{12} = A_{13} = A_{23} = 2$ but the associated reflections generate the group.} for any three distinct reflections $r_1, r_2, r_3$ in $I_2(m)$, we can choose roots $\alpha_1, \alpha_2, \alpha_3$ orthogonal to their reflecting lines such that the angles between the $\alpha_i$ have measures $\frac{\pi}{m} A_{12}$, $\frac{\pi}{m} A_{13}$, and $\frac{\pi}{m} A_{23}$ where $A_{12}, A_{13}, A_{23}$ are integers in $\{1, \ldots, m - 1\}$ with sum $2m$; moreover, the three reflections generate the whole group if and only if $\gcd(A_{12}, A_{13}, A_{23}, m) = 1$.   Likewise, it's easy to see that reflections $r_i$ and $r_j$ generate the group if and only if $\gcd(A_{ij}, m) = 1$.  

Suppose that $m$ is divisible by at least three primes; then we can write $m = p \cdot q \cdot r$ where $p, q, r > 1$ are pairwise relatively prime (though not necessarily prime themselves).  By the Chinese Remainder Theorem, there are (unique) integers $a, b \in \{1, \ldots, m - 1\}$ that satisfy the systems of congruences
\[
\begin{cases}
a \equiv 0 \pmod p \\
a \equiv 1 \pmod q \\
a \equiv 1 \pmod r
\end{cases}
\textrm{ and }
\begin{cases}
b \equiv 1 \pmod p \\
b \equiv 0 \pmod q \\
b \equiv -1 \pmod r
\end{cases}.
\]
If $a + b > m$, define $A_{12} := a$, $A_{13} := b$, $A_{23} := 2m - a - b$; otherwise, define $A_{12} := m - a$, $A_{13} := m - b$, $A_{23} := a + b$. Now, pick an arbitrary root $\alpha_1$ of $I_2(m)$ and let $\alpha_2,\alpha_3$ be the uniquely determined roots so that the triple $(\alpha_1,\alpha_2,\alpha_3)$ corresponds to the triple $(A_{12},A_{13},A_{23})$, and let $r_1,r_2,r_3$ be the three associated reflections. In either case, by construction, $\gcd(A_{12}, m) = p$, $\gcd(A_{13}, m) = q$, and $\gcd(A_{23}, m) = r$, so no pair of $r_1, r_2, r_3$ generates the group.  However, the three reflections together do generate $I_2(m)$, as
\[
\gcd(A_{12}, A_{13}, A_{23}, m) = \gcd(a, b, m) = \gcd(\gcd(a, m), \gcd(b, m)) = \gcd(p, q) = 1.
\qedhere
\]
\end{proof}

\begin{lemma}\label{Lem: classification min=min}
Let $W$ be an irreducible finite Coxeter group. Then $W$ satisfies the minimum-equals-minimal property if and only if it belongs to one of the following two categories:
\begin{enumerate}
    \item it is not of dihedral type, or
    \item it is of dihedral type $I_2(m)$ with $m$ having at most two distinct prime factors.
\end{enumerate}
\end{lemma}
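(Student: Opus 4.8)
The plan is to reduce the statement to a single implication, proved by induction on the rank. The reduction to irreducible factors is already in hand (the property holds for $W$ if and only if it holds for each irreducible factor), and Proposition~\ref{Prop: min-min dihedral case} supplies the ``only if'' direction, so what remains is to show that $W$ has the minimum-equals-minimal property whenever it is non-dihedral or is dihedral $I_2(m)$ with $m$ divisible by at most two primes. I would unify both cases via a remark on rank-$2$ reflection subgroups: any such subgroup $\langle s,t\rangle$ lies in its parabolic closure, a rank-$2$ parabolic $I_2(m_U)$ with $m_U$ one of the Coxeter-diagram labels $m_{ij}$ (or $2$), and $\langle s,t\rangle = I_2(m')$ with $m'\mid m_U$. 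For non-dihedral irreducible $W$ the labels lie in $\{2,3,4,5\}$, each a prime power, so every rank-$2$ reflection subgroup is $I_2$ of a prime power; for $I_2(m)$ with at most two primes every $m'\mid m$ again has at most two prime factors. Hence it suffices to prove the following \emph{Claim}: if every rank-$2$ reflection subgroup of $W$ has at most two distinct prime factors, then $W$ has the minimum-equals-minimal property.

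I would prove the Claim by induction on $n := \rank(W)$, reducing to the irreducible case at each stage. The base case $n=2$ is the number-theoretic heart. Normalize a generating set of $I_2(m)$ so that its reflections act across lines at angles $\frac{\pi}{m}a_i$ with $a_1 = 0$; then two reflections $a_i,a_j$ generate exactly when $\gcd(a_i-a_j,m)=1$, while the whole set generates exactly when $\gcd(a_2,\dots,a_k,m)=1$. Write $m=p^sq^t$ (allowing a single prime). If no pair generates, then $\gcd(a_i,m)>1$ for all $i$, so each $a_i$ is divisible by $p$ or by $q$; moreover one cannot have both an $a_i$ divisible by $q$ but not $p$ and an $a_j$ divisible by $p$ but not $q$, since then $a_i-a_j$ would be coprime to $m$ and that pair would generate. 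Hence all the $a_i$ share a single common prime, and the set fails to generate. Contrapositively, a generating set of $I_2(m)$ with at most two primes contains a generating pair, which is the minimum-equals-minimal property in rank $2$.

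For the inductive step, let $X$ be a reflection generating set with $|X|>n$. Its roots span $V$ and are linearly dependent, so they contain a linear-matroid circuit $C$ (a minimal dependent subset). The reflections of $C$ generate a reflection subgroup $\langle C\rangle$ of rank $|C|-1$, of which $C$ is a generating set of size $\rank(\langle C\rangle)+1$. If $|C|\le n$, then $\langle C\rangle$ is a proper subgroup of smaller rank whose rank-$2$ reflection subgroups still satisfy the hypothesis, so by induction it enjoys the minimum-equals-minimal property; thus $C$ is not minimal, and some $c\in C$ satisfies $\langle C\setminus\{c\}\rangle = \langle C\rangle \ni c$. Then $\langle X\setminus\{c\}\rangle = W$, so $c$ may be deleted, and iterating shrinks $X$ toward a minimal generating set.

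The main obstacle is the case where this reduction stalls: a minimal generating set $X^\ast$ of size $>n$ every circuit of which has size $n+1$, i.e.\ whose roots are in general position (every $n$ of them linearly independent). Here I must show directly that some $n$-element subset of $X^\ast$ generates $W$; its product would then be a bona fide quasi-Coxeter element of $W$ by Proposition~\ref{Prop: t_1...t_n=w is q-Cox} together with Proposition~\ref{prop:pqC factorization generates closure}, contradicting minimality. This is precisely the higher-rank analogue of the dihedral obstruction, and it is where the real work lies. The guiding intuition comes from the Borel--de Siebenthal picture: among $n+1$ roots carrying a single linear dependence, the configuration closest to an extended Dynkin diagram still has one distinguished ``affine'' node whose deletion recovers all of $W$. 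I would make this rigorous type-by-type, using the combinatorial generation models of the infinite families (transpositions as edges of a graph for $A_n$, where the generating sets are the connected spanning subgraphs and the minimal ones are the spanning trees of size $n$; signed-graph models for $B_n$ and $D_n$) together with finite verifications for $F_4$, $H_3$, $H_4$, and $E_6,E_7,E_8$, confirming that a general-position generating set always contains a generating subset of exactly $n$ reflections.
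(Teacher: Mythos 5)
Your reduction to the rank-$2$ hypothesis, your dihedral base case, and your small-circuit deletion step are all correct, and your base case is a clean, self-contained argument that handles generating sets of $I_2(m)$ of arbitrary size (the paper only needs and only treats triples). The overall architecture -- a structural reduction plus a type-by-type verification -- also matches the paper's, whose proof consists of exactly two such claims: (i) a minimal-but-non-minimum generating set in any $W$ forces some reflection subgroup $W'$ to have a minimal generating set of size $\rk(W')+1$, and (ii) in each irreducible group of categories (1) and (2), every generating set of size $\rank(W)+1$ is non-minimal, proved for $A_n$, $B_n$, $D_n$ via the (signed-)graph models you cite, for $I_2(m)$ by the gcd argument, and for the exceptional types by an exhaustive computer check of $(n+1)$-element subsets. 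So the part you defer as ``where the real work lies'' is precisely the content of the paper's proof; a sketch there is not yet a proof, although your plan for the infinite families would go through (indeed for $A_n$, $B_n$, $D_n$ the graph arguments need no general-position hypothesis at all).

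The more serious problem is structural: your induction on \emph{rank} makes the deferred claim strictly harder than the paper's claim (ii). A circuit of size exactly $n+1$ can generate a \emph{proper} reflection subgroup of full rank $n$, and induction on rank tells you nothing about such a subgroup, so your deletion step cannot process it. Concretely, the $n+1$ roots of an extended Dynkin diagram of a Borel--de Siebenthal subsystem form a single circuit (the affine relation involves all of them with nonzero coefficients) generating a proper full-rank subgroup: for instance, the nine roots of the affine $D_8$ diagram inside $E_8$ generate $W(D_8) \lneq W(E_8)$. Such circuits are perfectly compatible with your general-position condition. Consequently your stalled case genuinely includes general-position generating sets of \emph{every} size $> n$, and your exceptional-type verification must range over subsets of a priori unbounded cardinality -- a far larger search space than the paper's check of $(n+1)$-subsets, which is already described as barely tractable for $E_8$ even up to $W$-conjugacy. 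The fix is to induct not on rank but on the reflection subgroup itself (equivalently on $|W|$, as in the paper's claim (i)): if $|X| > n+1$ in the stalled case, take any $(n+1)$-subset $Y$; it is a circuit, $\langle Y\rangle$ has rank $n$, $\langle Y\rangle$ inherits your rank-$2$ hypothesis, and the size-$(\rk\langle Y\rangle+1)$ claim applied to $\langle Y\rangle$ lets you delete an element of $Y$ from $X$. This collapses the stalled case to sets of size exactly $n+1$ and recovers the paper's two-step proof; without that change, your plan as stated leaves the hardest and largest part of the verification both unproven and unnecessarily inflated.
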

\begin{proof}
The ``only if'' direction is covered by Proposition~\ref{Prop: min-min dihedral case}. Thus, it remains to show that if a finite Coxeter group belongs to categories (1) or (2), then it has the minimum-equals-minimal property.  Our proof has two parts: \textbf{first}, we show that if a finite Coxeter group $W$ has a minimal but non-minimum generating set of reflections, then there is some (proper or not) reflection subgroup $W'\leq W$ with a minimal generating set of reflections of size exactly $\rk(W') + 1$; \textbf{second}, we show that in every irreducible group $W$ in the categories (1) and (2), every reflection generating set of size $\rk(W) + 1$ is non-minimal (i.e., contains a proper subset that also generates $W$). 

We now explain why these two claims suffice to prove the lemma.  If $W$ belongs to (1) or (2), then any reflection subgroup of $W$ also belongs to (1) or (2) (since each dihedral subgroup belongs to a parabolic dihedral subgroup, whose index $m$ must equal a label in the Coxeter diagram of $W$).  Therefore, supposing $W$ contains a minimal but non-minimum generating set of reflections, we have by the first claim that there is a reflection subgroup $W' \leq W$, belonging to (1) or (2), with a minimal generating set of $\rank(W') + 1$ reflections.  However, this is a direct contradiction with the second claim, and so $W$ cannot in fact have a minimal but non-minimum reflection generating set.  Therefore, it suffices to prove the two claims.

For the first claim, we proceed by induction on the cardinality of $W$. The base case $W=A_1$ is trivially true since $A_1$ does not have any minimal but non-minimum generating set. Now pick a finite Coxeter group $W$ with a minimal but non-minimum generating set $X$. If $|X| = n+1$ we are done.  If, on the other hand, $|X| \geq n+2$, choose a reflection $t \in X$, and let $W' = \langle X \smallsetminus t \rangle$. Since $X$ is a minimal generating set of $W$, $W' \lneq W$.  Moreover, the set $X \smallsetminus t$ must be a \emph{minimal} generating set for $W'$ (or else a proper subset of $X \smallsetminus t$ together with $t$ would generate $W$). Finally, since $|X \smallsetminus t|\geq n+1 > n\geq\rank(W')$ and we are done by the inductive hypothesis for $W'$.

For the second claim, we consider separately the infinite families $A_{n-1}=\Symm_n$, $B_n$, $D_n$, $I_2(m)$, and the exceptional types.

{\bf In the group $A_{n-1}=\Symm_n$}, any set $X$ of reflections corresponds to a graph on $[n]$ by identifying the transposition $(i j)$ with an edge joining vertices $i$ and $j$, and generating sets correspond to connected graphs (see, e.g., \cite[Prop.~2.1]{DLM1}).  Every connected graph contains a spanning tree.  The subset of $X$ corresponding to the spanning tree consists of $n-1$ reflections and generates $\Symm_n$. Therefore any generating set of reflections for $\Symm_n$ contains a generating subset of cardinality $n-1=\rank(\Symm_n)$.

{\bf In the group $B_n$}, any set of reflections corresponds to a signed graph on $[n]$ (with diagonal reflections corresponding to loops).  In order for a set of reflections to generate the whole group $B_n$, the graph must be connected (otherwise it generates a subgroup of a conjugate of $B_k \times B_{n - k}$ for some $k$) and must contain at least one loop (otherwise it generates a subgroup of $D_n$).  Any connected graph with a loop contains a spanning tree with a loop; the subset of reflections corresponding to this subgraph is a minimum generating set for $B_n$.

{\bf In the group $D_n$}, any set of reflections corresponds to a loopless signed graph on $[n]$.  In order for a set of reflections to generate the whole group $D_n$, the graph must be connected (otherwise it generates a subgroup of a conjugate of $D_k \times D_{n - k}$ for some $k$) and must contain at least one negative cycle (i.e., a cycle with an odd number of negative edges) (otherwise, it generates a subgroup conjugate to $\Symm_n$, since every signed spanning tree generates a conjugate of $\Symm_n$ \cite[Lem.~2.7]{Shi2005} that includes all the signed transpositions that do \emph{not} create a negative cycle with the tree).  Given a loopless connected signed graph with at least one negative cycle, fix a negative cycle, and delete edges from the graph one-by-one provided that the deleted edges do not belong to the cycle and removing them does not disconnect the graph.  The result is a signed \emph{unicycle} (a connected graph with exactly one cycle) whose unique cycle is the chosen negative cycle.  All unicycles on $n$ vertices have precisely $n$ edges, and all signed unicycles whose cycle is negative correspond to a generating set of reflections in $D_n$, so this subgraph corresponds to a minimum generating set for $D_n$.  

{\bf In the group $I_2(m)$ with $m$ having at most two (distinct) prime factors $p$ and $q$},
consider any three reflections $r_1,r_2,r_3$ that generate the whole group $I_2(m)$ and the associated integers $A_{12},A_{13},A_{23}$ described in the proof of Proposition~\ref{Prop: min-min dihedral case}. The integers $A_{ij}$ are constructed so that $A_{12}+A_{13}+A_{23}=2m$ and the assumption that the $r_i$'s generate the full group implies (again, see \cite[\S3.2]{LR}) that we must have $\gcd(A_{12}, A_{13}, A_{23}, m)=1$. From the first equation, if $p$ divides two of the $A_{ij}$ then it divides all three, which contradicts the second equation; likewise for $q$.  But then one of the $A_{ij}$ is divisible by neither $p$ nor $q$ and so is relatively prime to $m$; in this case the reflections $r_i$ and $r_j$ are sufficient to generate the whole group $W$.

{\bf In the exceptional types}, we used an exhaustive computer calculation, checking (for each group of rank $n$) all $(n + 1)$-element generating subsets of reflections and confirming that they each contain an $n$-element generating set.  (For the larger groups, this computation is made tractable by considering sets of reflections up to conjugation by $W$, as in \cite[\S3.6]{LR}.)
\end{proof}

\begin{remark}
For the simply laced types there is a simpler argument that combines two existing results: it was shown in \cite[Lem.\ 5.12]{BGRW} that if $W$ is simply laced, then a set of reflections in $W$ generates $W$ if and only if the roots orthogonal to their reflecting hyperplanes form a $\ZZ$-spanning set for the root lattice of $W$.  By \cite[Thm.\ 4.1]{BalnojanHertling}, if a set of roots generates the root lattice, then it contains a $\ZZ$-basis for the root lattice.  And finally again by \cite[Lem.\ 5.12]{BGRW}, the reflections corresponding to this $\ZZ$-basis form a minimum generating set for $W$.
\end{remark}

All reflection subgroups of Weyl groups are themselves Weyl groups and, moreover, the only irreducible dihedral groups that are crystallographic are $I_2(3)=\Symm_3$, $I_2(4)=B_2$, and $I_2(6)=G_2$. The following is then immediate.

\begin{corollary}
\label{cor:Weyl}
All Weyl groups $W$ satisfy the minimum-equals-minimal property of Definition~\ref{Defn: minimum=minimal}.
\end{corollary}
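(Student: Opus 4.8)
The plan is to reduce immediately to the irreducible case and then invoke Lemma~\ref{Lem: classification min=min}, in which all the real content already resides. As observed just after Definition~\ref{Defn: minimum=minimal}, a finite Coxeter group enjoys the minimum-equals-minimal property if and only if each of its irreducible factors does; since a Weyl group is by definition a direct product of irreducible Weyl groups, it suffices to verify the property for each irreducible Weyl group on its own.

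First I would recall the classification of irreducible Weyl groups: they are exactly the types $A_n$, $B_n$, $D_n$, $E_6$, $E_7$, $E_8$, $F_4$, and $G_2$. Every one of these other than the rank-two groups is non-dihedral, and so falls into category (1) of Lemma~\ref{Lem: classification min=min} and thereby has the property. The only point requiring genuine attention is the low-rank coincidences $A_2 \cong I_2(3) \cong \Symm_3$, $B_2 \cong I_2(4)$, and $G_2 \cong I_2(6)$, which \emph{are} of dihedral type and so must instead be checked against category (2) rather than being (incorrectly) handled by category (1).

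To finish I would observe that these three are the \emph{only} irreducible dihedral Weyl groups, corresponding to the crystallographic restriction $m \in \{3,4,6\}$, and that each of the integers $3$, $4 = 2^2$, and $6 = 2 \cdot 3$ has at most two distinct prime factors; hence all three fall into category (2) of Lemma~\ref{Lem: classification min=min} and likewise possess the property. Every irreducible Weyl group therefore has the minimum-equals-minimal property, and by the product reduction so does every Weyl group. The main (indeed only) obstacle here is conceptual rather than computational: one must correctly route the dihedral coincidences $A_2$, $B_2$, and $G_2$ into category (2), since the crystallographic constraint $m \in \{3,4,6\}$ is precisely what keeps them clear of the ``three distinct primes'' obstruction identified in Proposition~\ref{Prop: min-min dihedral case}. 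Once this is done, no further argument is needed.
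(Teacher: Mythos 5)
Your proof is correct and follows essentially the same route as the paper: reduce to irreducible factors, note that the only irreducible crystallographic dihedral groups are $I_2(3)$, $I_2(4)$, and $I_2(6)$ (each with $m$ having at most two distinct prime factors), and then apply Lemma~\ref{Lem: classification min=min}. The paper treats this as immediate from exactly these observations, so no gap and no meaningful difference in approach.
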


\begin{remark}
For real reflection groups, Proposition~\ref{Prop: min-min dihedral case} establishes that \emph{minimal-but-not-minimum} generating sets of reflections are reasonably well behaved: restricted to dihedral types and controlled by some elementary number theory.  In complex reflection groups, generating sets with this property are much more complicated; perhaps this is a reason why Conjecture~\ref{lewis conjecture} is truly a conjecture for the real types and fails in some complex cases.
\end{remark}

\section{Main result}
\label{sec: main result}

In this section, we prove our main result on the Hurwitz orbits of factorizations of parabolic quasi-Coxeter elements.  We begin with the case of minimum-length full reflection factorizations.

\begin{proposition}\label{Prop: hurw trans on min full}
If $W$ is a finite Coxeter group and $g\in W$ a parabolic quasi-Coxeter element, then the Hurwitz action is transitive on minimum-length full reflection factorizations of $g$.
\end{proposition}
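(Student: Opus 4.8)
The plan is to combine the normal form of Lemma~\ref{lem: LR lemma} with the transitivity of the Hurwitz action on reduced factorizations of parabolic quasi-Coxeter elements (Proposition~\ref{Prop: full length charact of p-q-cox}, $(1)\Leftrightarrow(2)$), reducing the statement to a question about the ``doubled'' part of the factorization, which I would then attack through the theory of quasi-Coxeter elements.

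Write $k := \lR(g)$ and $n := \rank(W)$. By Proposition~\ref{Prop: full length charact of p-q-cox}~$(1)\Leftrightarrow(4)$, a minimum-length full reflection factorization has length $N = 2n - k$, so that $N - k = 2(n-k)$ is even. First I would apply Lemma~\ref{lem: LR lemma} to two given minimum-length full factorizations $F, F'$, putting them in the form $(a_1, a_1, \ldots, a_{n-k}, a_{n-k}, t_1, \ldots, t_k)$ and $(b_1, b_1, \ldots, b_{n-k}, b_{n-k}, u_1, \ldots, u_k)$, where in each the final $k$ entries are a reduced factorization of $g$. Because $g$ is a parabolic quasi-Coxeter element, the Hurwitz action is transitive on its reduced factorizations (Proposition~\ref{Prop: full length charact of p-q-cox}, $(1)\Leftrightarrow(2)$); applying a braid supported on the last $k$ strands, I may assume $u_j = t_j$ for all $j$. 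It then remains to connect two factorizations that share the suffix $(t_1, \ldots, t_k)$ and differ only in their doubled prefixes.

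Next I would record the structural fact that makes quasi-Coxeter theory available. Since the factors of a full factorization generate $W$, the $n$ reflections $a_1, \ldots, a_{n-k}, t_1, \ldots, t_k$ (and likewise the $b$-version) form a generating set of $W$ of size exactly $n$; as in the proof of Proposition~\ref{Prop: t_1...t_n=w is q-Cox} their roots are linearly independent, so the product $w := a_1 \cdots a_{n-k} t_1 \cdots t_k$ is a quasi-Coxeter element with $g \leq_T w$ (the suffix $t_1 \cdots t_k = g$ being a reduced factorization), and similarly for $w' := b_1 \cdots b_{n-k} t_1 \cdots t_k$. The engine for connecting the doubled prefixes is the transitivity of the Hurwitz action on reduced factorizations of a quasi-Coxeter element (Proposition~\ref{Prop: full length charact of p-q-cox}, $(1)\Leftrightarrow(2)$, applied with parabolic closure equal to $W$): for a fixed such $w$, transitivity on reduced factorizations of $g$ and on reduced factorizations of the complementary factor $wg^{-1}$ (which is itself parabolic quasi-Coxeter, being $\leq_T w$, by $(1)\Leftrightarrow(3)$) should let me standardize the doubled prefix attached to that $w$.

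The main obstacle is that the two Hurwitz actions in play --- on the length-$(2n-k)$ doubled factorization and on the length-$n$ reduced factorization of $w$ --- do not correspond move-for-move. A direct computation shows that, while doubled pairs can be slid past one another, e.g. $(a,a,b,b) \sim (b,b,a,a)$ and $(a,a,b,b)\sim (bab,bab,b,b)$, moves internal to the doubled prefix only permute or conjugate the underlying reflections, and in particular cannot change the $W$-conjugacy class of the quasi-Coxeter product $w$. Yet changing $w$ is unavoidable: already for $g = \id$ in $I_2(5)$, the minimum-length full factorizations of the identity realize quasi-Coxeter elements lying in two distinct conjugacy classes, all of which must belong to a single Hurwitz orbit. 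The crux of the proof is therefore to show that one can pass between the normal forms attached to different quasi-Coxeter elements $w \geq_T g$ by transferring a reflection between the suffix and the doubled prefix and then re-applying Lemma~\ref{lem: LR lemma} to renormalize, and that the resulting moves connect all admissible $w$. I expect this traversal step --- equivalently, the connectivity of the quasi-Coxeter elements above $g$ under such elementary transfers, together with the transitivity input above --- to require the most care, and it is here that the hypothesis that $g$ is a parabolic quasi-Coxeter element (rather than arbitrary) should prove essential.
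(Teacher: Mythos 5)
Your reductions at the start are sound: the normal form from Lemma~\ref{lem: LR lemma}, the standardization of the reduced suffix via Proposition~\ref{Prop: full length charact of p-q-cox}~[$(1)\leftrightarrow(2)$], and the observation (via Proposition~\ref{Prop: t_1...t_n=w is q-Cox}) that the distinct factors multiply to a quasi-Coxeter element $w$ are all correct. Your diagnosis of the obstruction is also correct and genuinely insightful: moves that stay within the doubled normal form cannot change the $W$-conjugacy class of $w$, yet that class is not a Hurwitz invariant (your $I_2(5)$ example with $g=\id$ is right). But at precisely this point the proposal stops being a proof. The ``traversal step'' --- that one can pass between normal forms attached to different quasi-Coxeter elements $w \geq_T g$ by transferring reflections between suffix and prefix, and that such transfers connect \emph{all} admissible $w$ --- is stated as an expectation, not established. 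This connectivity claim is the entire mathematical content of the proposition: nothing in the toolkit you assembled yields it, it is not a citable known result, and it is not visibly easier than the statement being proved. So the proposal reduces the proposition to an unproven claim of comparable difficulty; that is a genuine gap, not a detail.

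It may help to see how the paper sidesteps the issue: it never tries to connect two doubled prefixes over a fixed suffix. Instead it inducts on the minimum full length $\ltr(g)=2n-k$ by \emph{changing the element}. Given the two factorizations, one uses transitivity on reduced factorizations of the quasi-Coxeter element $w:=t_1\cdots t_n$ (all reflections lie below $w$ by Lemma~\ref{Lem: carter}) to bring the first factorization into a form ending in $t'_n$, the last factor of the second. Cancelling this common final factor leaves two length-$(2n-k-1)$ factorizations of $gt'_n$. The rewriting $w' = t'_1\cdots t'_k\cdot t'_n\cdot (t'_n t'_{k+1}t'_n)\cdots (t'_n t'_{n-1}t'_n)$ shows $gt'_n \leq_T w'$ with $w'$ quasi-Coxeter, so $gt'_n$ is parabolic quasi-Coxeter of reflection length $k+1$ by Proposition~\ref{Prop: full length charact of p-q-cox}~[$(1)\leftrightarrow(3)$], and Corollary~\ref{Cor: p-q-Cox heredit} is what guarantees that both shortened factorizations are still \emph{full}; induction then applies since $2n-(k+1) < 2n-k$. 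This ``absorb a reflection into $g$'' maneuver is exactly the mechanism, missing from your plan, by which the conjugacy class of the ambient quasi-Coxeter element gets changed. To complete your approach you would need either this idea or an honest proof of your traversal/connectivity claim.
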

\begin{proof}
Denote the reflection length of $g$ by $k$ (i.e., $k=\lR(g)$).  By Proposition~\ref{Prop: full length charact of p-q-cox}~[$(1)\leftrightarrow (4)$], the minimum length of full factorizations of $g$ is $2n-k$, where $n$ is the rank of $W$.  By Lemma~\ref{lem: LR lemma}, every reflection factorization of $g$ as given in the statement is Hurwitz-equivalent to one of the form 
\begin{equation}
g= t_1\cdots  t_k\cdot  t_{k+1} \cdot  t_{k+1}\cdots  t_n \cdot  t_n,
\label{EQ: g-facn-1}
\end{equation}
where $ t_1\cdots  t_k=g$ is a reduced reflection factorization of $g$. It is sufficient to show that the factorization \eqref{EQ: g-facn-1} is Hurwitz-equivalent to any other of the form 
\begin{equation}
g= t'_1\cdots  t'_k\cdot  t'_{k+1} \cdot  t'_{k+1}\cdots  t'_n  \cdot t'_n
\label{EQ: g-facn-2}
\end{equation}
with $\langle t'_i\rangle=W$. 
We proceed by induction on the common length $2n - k$ of the two factorizations.  In the base case, the length takes the smallest possible value $2n-k=n$.  This means that the factorization $g = t_1 \cdots t_n$ is simultaneously reduced and full, so $g$ is a quasi-Coxeter element and Proposition~\ref{Prop: full length charact of p-q-cox}~[$(1)\leftrightarrow (2)$] applies. 

Now assume that $2n - k > n$.  We aim to find a factorization in the same Hurwitz orbit as \eqref{EQ: g-facn-1} whose last factor is $t'_n$.  We first turn \eqref{EQ: g-facn-1} to a factorization of the form 
\begin{equation}
g= t_1\cdots  t_n\cdot  t''_{n+1}\cdots  t''_{2n-k},\label{EQ: g-facn-3}
\end{equation}
by sliding some terms $ t_i$ with left Hurwitz moves \eqref{eq:Hurwitz move}.  Since the factorization \eqref{EQ: g-facn-1} is full, we must further have that $\langle  t_i\rangle=W$, and then by Proposition~\ref{Prop: t_1...t_n=w is q-Cox} that $w:= t_1\cdots  t_n$ is a quasi-Coxeter element of $W$.  Since $w= t_1\cdots  t_n$ is quasi-Coxeter, its parabolic closure is the full group $W$ and hence (by Lemma~\ref{Lem: carter}) all reflections lie below $w$ in absolute order $\leq_{\RRR}$. In particular, $w$ has a reduced reflection factorization whose last factor is $ t'_n$.   Since $w$ is quasi-Coxeter, we can reach that factorization from $t_1 \cdots t_n$ via Hurwitz moves; then we can slide $ t'_n$ to the last position of \eqref{EQ: g-facn-3} via right Hurwitz moves \eqref{eq:inverse Hurwitz move}. This all means that the first factorization \eqref{EQ: g-facn-1} of $g$ is Hurwitz-equivalent to one of the following form: 
\begin{equation}
g=\widehat{ t}_1\cdots \widehat{ t}_{2n-k-1}\cdot  t_n'.
\label{EQ: g-facn-4}
\end{equation}
We would now like to show that the factorizations \eqref{EQ: g-facn-2} and \eqref{EQ: g-facn-4} are Hurwitz-equivalent. It is sufficient to prove the equivalence of the following two factorizations:
\begin{equation}
g t'_n=\widehat{ t}_1\cdots \widehat{ t}_{2n-k-1}\qquad\text{and}\qquad gt_n'= t'_1\cdots  t'_k\cdot  t'_{k+1} \cdot t'_{k+1}\cdots  t'_{n-1}\cdot t'_{n-1}\cdot  t_n'.
\label{Eq: two facns}
\end{equation}
Define 
\begin{equation}
    w' := t'_1\cdots t'_k\cdot t'_{k+1}\cdots t'_n = t'_1 \cdots t'_k \cdot t'_n \cdot (t'_n t'_{k + 1} t'_n) \cdots (t'_n t'_{n - 1} t'_n).
    \label{Eq: w'=reduced}
\end{equation}
The element $w'$ is quasi-Coxeter for the same reasons that $w$ is, and the two factorizations in \eqref{Eq: w'=reduced} are reduced factorizations of $w'$.  Therefore, from the second factorization in \eqref{Eq: w'=reduced}, we have that $t'_1 \cdots t'_k t'_n = gt'_n$ lies below $w'$ in the absolute order $\leq_{\RRR}$.  Then by Proposition~\ref{Prop: full length charact of p-q-cox}~[$(1)\leftrightarrow (3)$], $gt'_n$ is a parabolic quasi-Coxeter element.  It follows that the two factorizations in \eqref{Eq: two facns} are length-$(2n - k - 1)$ reflection factorizations of a parabolic quasi-Coxeter element whose reflection length is $k + 1$;  the second is manifestly full (it shares the same set of factors as the full factorization \eqref{EQ: g-facn-2}), and so to conclude using the inductive hypothesis it suffices to show that the first factorization in \eqref{Eq: two facns} is also full.

Let $W':=\langle \widehat{t}_1,\ldots,\widehat{t}_{2n-k-1}\rangle$.  By construction we have that $\langle W', t'_n\rangle =W$. Since $gt'_n$ is parabolic quasi-Coxeter in $W$ and also belongs to $W'$, we have by Corollary~\ref{Cor: p-q-Cox heredit} that $W'$ contains the parabolic closure $W_{gt'_n}=\langle t'_1,\ldots,t'_k,t'_n\rangle$. In particular, $W'$ contains $t'_n$.  Since $\langle W', t'_n\rangle = W$, this means $W' = W$; that is, the first factorization of \eqref{Eq: two facns} is indeed full. Now the inductive assumption guarantees that the two factorizations in \eqref{Eq: two facns} are Hurwitz-equivalent, and so (by the preceding arguments) that the factorizations \eqref{EQ: g-facn-1} and \eqref{EQ: g-facn-2} are equivalent.  The result follows by induction.
\end{proof}

We are now prepared for the proof of our main result.

\begin{theorem}\label{thm: main}
Conjecture~\ref{lewis conjecture} is true whenever $g$ is a parabolic quasi-Coxeter element in the finite Coxeter group $W$.
\end{theorem}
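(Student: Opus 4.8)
The plan is to reduce the general statement (factorizations of arbitrary length) to the minimum-length case already handled in Proposition~\ref{Prop: hurw trans on min full}. The two invariants in Conjecture~\ref{lewis conjecture} — the subgroup $H = \langle t_1, \ldots, t_N \rangle$ generated by the factors and the multiset of $H$-conjugacy classes — are manifestly Hurwitz-invariant, so only the ``if'' direction needs proof: two factorizations of $g$ agreeing on both invariants must be Hurwitz-equivalent. Fix two such factorizations of $g$ into $N$ reflections, generating the common subgroup $H \leq W$ and carrying the same multiset of $H$-conjugacy classes. Since $g$ is a parabolic quasi-Coxeter element of $W$, by Corollary~\ref{Cor: p-q-Cox heredit} the parabolic closure $W_g$ is contained in $H$ and $g$ is parabolic quasi-Coxeter \emph{in} $H$. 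This lets me replace $W$ by $H$ throughout and assume from the start that both factorizations are \emph{full}, i.e. generate $W = H$.

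\smallskip

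So reduce to the case $H = W$: two full factorizations of a parabolic quasi-Coxeter element $g$, of the same length $N$, with the same multiset of $W$-conjugacy classes. By Proposition~\ref{Prop: hurw trans on min full}, the Hurwitz action is already transitive on the \emph{minimum-length} full factorizations (those of length $2n - \lR(g)$), so the content is to strip away the ``excess'' length $N - (2n - \lR(g))$ while controlling the conjugacy-class data. The natural tool is Lemma~\ref{lem: LR lemma}: each factorization is Hurwitz-equivalent to one of the form $(t_1, t_1, t_3, t_3, \ldots, t_{N-k}, t_{N-k}, \text{reduced factorization of } g)$, i.e. a reduced factorization of $g$ preceded by a block of repeated pairs $t_it_i$. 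A repeated pair $(t, t)$ contributes the identity to the product but contributes two copies of the conjugacy class of $t$ to the multiset. The idea is to show that any two such ``padded'' full factorizations with the same conjugacy-class multiset can be matched up: I want to cancel or rearrange the repeated pairs using Hurwitz moves so that, after removing one matching pair from each, I obtain shorter full factorizations of $g$ to which I can apply induction on $N$.

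\smallskip

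Concretely, I would induct on $N$, the base case $N = 2n - \lR(g)$ being exactly Proposition~\ref{Prop: hurw trans on min full}. For the inductive step, put both factorizations into the padded normal form of Lemma~\ref{lem: LR lemma}. Because the two factorizations share a conjugacy-class multiset and (after passing to $H = W$) are full, the multiset of classes appearing in the ``padding'' pairs must agree. I then want to locate a repeated pair $(t, t)$ in the first factorization and a pair $(t', t')$ in the second with $t, t'$ conjugate, use Hurwitz moves to slide each pair to a common position and conjugate within its class (here the fact that the whole group is generated, together with Carter's lemma, should let me move between conjugate reflections), and delete the pair from each to obtain full factorizations of $g$ of length $N - 2$ with matching invariants. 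The delicate point — and the step I expect to be \textbf{the main obstacle} — is guaranteeing that after deleting a pair the remaining factorization is still \emph{full}, i.e. still generates all of $W$; dropping two reflections could in principle shrink the generated subgroup. This is exactly the phenomenon controlled by the minimum-equals-minimal property of Section~\ref{sec:min}: the classification in Lemma~\ref{Lem: classification min=min} (and Corollary~\ref{cor:Weyl} for Weyl groups), which guarantees — outside the bad dihedral cases — that a minimal generating set has minimum size, should ensure there is always a ``removable'' repeated pair whose deletion preserves generation, letting the induction proceed. The dihedral exceptions will have to be handled separately, presumably by the explicit number-theoretic analysis of $I_2(m)$ already set up in the proof of Proposition~\ref{Prop: min-min dihedral case}.
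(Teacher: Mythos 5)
Your overall skeleton coincides with the paper's: reduce to full factorizations by replacing $W$ with $H$ via Corollary~\ref{Cor: p-q-Cox heredit}, induct on the length with Proposition~\ref{Prop: hurw trans on min full} as base case, and use Lemma~\ref{lem: LR lemma} to put both factorizations into padded normal form. But the inductive step, as you describe it, has a genuine hole exactly at the point you flag as ``the main obstacle.'' The minimum-equals-minimal property guarantees that the set $\{t_1,\ldots,t_{n+s}\}$ of factors contains an $n$-element generating subset, hence that \emph{some} factor is removable; it does \emph{not} guarantee that a removable factor can be found among the padding pairs $t_{k+1},\ldots,t_{n+s}$. The removable reflection may be forced to be one of $t_1,\ldots,t_k$ from the reduced part, where it occurs only once and there is no pair to delete. (Moreover, even when both factorizations do possess removable padding pairs, you need removable pairs that are conjugate \emph{to each other}; your observation that the padding class-multisets agree --- which is correct, since all reduced factorizations of $g$ are Hurwitz-equivalent --- does not give this, because removability and conjugacy class are independent constraints. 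The paper handles the matching instead via Lemma~\ref{Cor. refl gen sets same conj classes}.) This is precisely why the paper's proof splits into the cases $i>k$ and $i\leq k$, and the case $i\leq k$ is the technical heart of the whole argument: the unneeded reflection is extracted from the reduced part, moved to the end (conjugating everything), a \emph{second} copy of a conjugate reflection $t'$ is then manufactured inside the remaining factors using Proposition~\ref{Prop: t_1...t_n=w is q-Cox}, Lemma~\ref{Lem: carter}, and Proposition~\ref{Prop: hurw trans on min full}, and fullness of the resulting prefix is verified with Corollary~\ref{Cor: p-q-Cox heredit} plus Carter's lemma. Nothing in your sketch produces this case, and ``min=min should ensure there is always a removable repeated pair'' is exactly the assertion that fails. (Separately, once both factorizations end in conjugate pairs, making those pairs \emph{equal} requires the explicit Hurwitz-move gadget from \cite{LR}, using fullness of the prefix; your appeal to Carter's lemma gestures at this but is not an argument.)

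The second gap is the dihedral case. Proposition~\ref{Prop: min-min dihedral case} shows that $I_2(m)$ with at least three distinct prime factors \emph{fails} the minimum-equals-minimal property; it constructs counterexamples to that property and says nothing about Hurwitz orbits, so it cannot ``handle the dihedral exceptions separately.'' The paper disposes of all dihedral groups by citing the prior result \cite{berger}, where Conjecture~\ref{lewis conjecture} was established for dihedral groups in full; without that citation (or a genuinely new argument), an entire family of groups is left uncovered by your proof. A smaller omission of the same flavor: after replacing $W$ by $H$, your induction needs $H$ itself to satisfy minimum-equals-minimal, which the paper justifies by noting that the property passes to reflection subgroups (a consequence of the proof of Lemma~\ref{Lem: classification min=min}); this must be said, since reflection subgroups of non-dihedral groups can be dihedral.
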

\begin{proof}
It is sufficient to prove the theorem for irreducible groups.  The case of the dihedral groups has already been covered in \cite{berger}.  By Lemma~\ref{Lem: classification min=min}, all the non-dihedral types satisfy the minimum-equals-minimal property of Definition~\ref{Defn: minimum=minimal}, and so it is enough to prove it for this class of groups.  Therefore, let $W$ be a finite Coxeter group with the minimum-equals-minimal property and let $g$ be a parabolic quasi-Coxeter element of $W$, and consider two reflection factorizations of $g$ that generate the same subgroup $W' \leq W$ and have the same multiset of $W'$-conjugacy classes.

By Corollary~\ref{Cor: p-q-Cox heredit}, $g$ is parabolic quasi-Coxeter in $W'$.  Further, it follows from the proof of Lemma~\ref{Lem: classification min=min} that if $W$ has the minimum-equals-minimal property, then the same is true of all of its reflection subgroups; in particular, it is true of $W'$.  So without loss of generality we may as well relabel $W'$ as $W$ and consider the case that the two factorizations of $g$ are full.  Let $k = \lR(g)$ be the reflection length of $g$ and let $n = \rank(W)$ be the rank of $W$, so that the length of the two factorizations is at least $\ltr(g)=2n- k$ (by Proposition~\ref{Prop: full length charact of p-q-cox} [$(1) \leftrightarrow (4)$]).  We now proceed by induction on the length of the factorizations.  

The base case is Proposition~\ref{Prop: hurw trans on min full}.  Now suppose the two full factorizations of $g$ have length $2n -k + 2s$ for $s > 0$.  By Lemma~\ref{lem: LR lemma}, we can assume that the two factorizations have the form 
\begin{equation}
\label{eq:two factorizations}
g=t_1\cdots t_k\cdot t_{k+1} \cdot t_{k+1} \cdots t_{n+s} \cdot t_{n + s}\qquad \text{and}\qquad g=t'_1\cdots t'_k\cdot t'_{k+1}\cdot t'_{k+1}\cdots t'_{n+s}\cdot t'_{n+s}.
\end{equation}
Our strategy is to apply some Hurwitz moves to make the two factorizations agree in their last two factors while remaining full factorizations of $g$.  Toward that end, observe that $\{t_1, \ldots, t_{n + s}\}$ and $\{t'_1, \ldots, t'_{n + s}\}$ are generating sets of $W$ (because both factorizations are full) that are not minimum (because $s > 0$).  Therefore, since $W$ is assumed to have the minimum-equals-minimal property, in each set there is a reflection that can be removed to leave a generating set of reflections.  Moreover, since the two factorizations have the same multiset of conjugacy classes and (by Lemma~\ref{Cor. refl gen sets same conj classes}) all minimal generating sets have the same multiset of conjugacy classes, we can even arrange to choose the ``unnecessary'' reflections $t_i$ and $t'_j$ to be conjugate in $W$.  We now explain how to produce factorizations in the same Hurwitz orbits as those in \eqref{eq:two factorizations} that end in a pair of equal factors $\cdots t' \cdot t'$ that are conjugate to $t_i$.

Suppose first that $i > k$.  In that case, we can use Hurwitz moves to slide the two copies of $t_i$ to the end of the factorization, producing a new factorization
\[
g = t_1 \cdots t_k \cdot t_{k + 1} \cdot t_{k + 1} \cdots t_{i - 1} \cdot t_{i - 1} \cdot t_{i + 1} \cdot t_{i + 1} \cdots t_{n + s} \cdot t_{n + s} \cdot t_i \cdot t_i
\]
in which the prefix is a full reflection factorization of $g$.

On the other hand, suppose that $i \leq k$.  Apply the Hurwitz moves $\sigma_{i - 1}^{-1}$, $\sigma_{i - 2}^{-1}$, \ldots, $\sigma_1^{-1}$ in that order to produce a new factorization $g = t' \cdot t_1 \cdots t_{i - 1} \cdot t_{i + 1} \cdots t_k \cdot t_{k + 1} \cdot t_{k + 1} \cdots t_{n + s} \cdot t_{n + s}$ for some reflection $t'$ that belongs to the same conjugacy class as $t_i$ and still lies below $g$ is absolute order.  Since the remaining factors have not changed, they still include an $n$-element generating set for $W$.  Now apply the Hurwitz moves $\sigma_1^{-1}$, $\sigma_2^{-1}$, \ldots, $\sigma_{2n - k + 2s - 1}^{-1}$ in that order to produce a new factorization in which $t'$ is at the end (instead of the beginning) and all other factors have been conjugated by $t'$.  Conjugating a generating set gives another generating set, so the remaining factors (omitting $t'$) contain an $n$-element generating set; we can use Hurwitz moves to bring these $n$ factors to consecutive positions.  As in the proof of Proposition~\ref{Prop: hurw trans on min full}, the product $w$ of these $n$ factors is a quasi-Coxeter element (Proposition~\ref{Prop: t_1...t_n=w is q-Cox}) and $t' \leq_{\RRR} w$ (Lemma~\ref{Lem: carter}), so by Proposition~\ref{Prop: hurw trans on min full} we can apply some Hurwitz moves to these $n$ factors and produce a factorization with a second copy of $t'$.  We may again use Hurwitz moves to slide this second copy of $t'$ to the end, and then apply Lemma~\ref{lem: LR lemma} to the prefix of $2n - k + 2s - 2$ factors to produce a new factorization of the form
\begin{equation}
    g = t''_1 \cdots t''_k \cdot t''_{k + 1} \cdot t''_{k + 1} \cdots t''_{n + s - 1} \cdot t''_{n + s - 1} \cdot t' \cdot t'.
\label{so many factorizations}
\end{equation}
This is in the desired form, but we must explain why the prefix (omitting the final two $t'$ factors) is full; this will finally use the fact that $t' \leq_{\RRR} g$.  Let $W' = \langle t''_1, \ldots, t''_{n + s - 1}\rangle$.  By Corollary~\ref{Cor: p-q-Cox heredit}, $W_g \leq W'$, and by Lemma~\ref{Lem: carter}, $t' \in W_g$.  Therefore $t' \in W'$.  Thus $W' = \langle t''_1, \ldots, t''_{n + s - 1}, t'\rangle = W$ (where the last equality comes because the group generated by the factors is invariant under the Hurwitz action).  Thus the prefix (dropping the two copies of $t'$) is full, as claimed.

The last two paragraphs show that the first factorization in \eqref{eq:two factorizations} is Hurwitz-equivalent to a factorization in which the last two factors are equal and come from a prescribed conjugacy class, and for which the prefix (omitting those factors) is a full factorization of $g$.  The same argument applies to the second factorization in \eqref{eq:two factorizations}; and, moreover, because the two factorizations in \eqref{eq:two factorizations} have the same multiset of conjugacy classes, we can do this in such a way that the final two factors in the two new factorizations belong to the same conjugacy class.  To finish, we use the same technique as in \cite[Thm.~1.1]{LR}, using Hurwitz moves to bring factors from the prefix to the third position from the end, applying the Hurwitz moves (with $M=2n-k+2s$)
\[
(\ldots, t, t', t') \xrightarrow{\sigma_{M-2}} (\ldots, t', t'tt', t') \xrightarrow{\sigma_{M-1}} (\ldots, t', t', t)\xrightarrow{\sigma_{M-1}} (\ldots, t', t, tt't) \xrightarrow{\sigma_{M-2}} (\ldots, t, tt't, tt't),
\]
and then restoring the prefix to its original form.  Since the prefix generates $W$, we can by repeated application of this strategy replace the final two factors with any conjugate pair of reflections; in particular, we may arrange so that the two factorizations in question agree on their final two factors.  Since the prefixes are full and have shorter length, we are done by induction.
\end{proof}

\bibliographystyle{alpha}
\bibliography{biblio}

\end{document}